\newcommand\version{April 1, 2022}
\newtheorem{theorem}{Theorem}
\newtheorem{lemma}[theorem]{Lemma}
\newtheorem{corollary}[theorem]{Corollary}
\theoremstyle{definition}
\theoremstyle{remark}
\newtheorem{remark}[theorem]{Remark}
\newcommand{\1}{\mathbbm{1}}
\renewcommand{\epsilon}{\varepsilon}
\newcommand{\loc}{{\rm loc}}
\renewcommand{\phi}{\varphi}
\newcommand{\R}{\mathbb{R}}
\newcommand{\Sph}{\mathbb{S}}
\DeclareMathOperator{\re}{Re}
\begin{document}

\title[A Hardy inequality --- \version]{An improved one-dimensional Hardy inequality}

\author{Rupert L. Frank}
\address[Rupert L. Frank]{Mathe\-matisches Institut, Ludwig-Maximilans Universit\"at M\"unchen, The\-resienstr.~39, 80333 M\"unchen, Germany, and Munich Center for Quantum Science and Technology, Schel\-ling\-str.~4, 80799 M\"unchen, Germany, and Mathematics 253-37, Caltech, Pasa\-de\-na, CA 91125, USA}
\email{r.frank@lmu.de}

\author{Ari Laptev}
\address[Ari Laptev]{Department of Mathematics, Imperial College London, Huxley Building, 180 Queen’s Gate, London SW7 2AZ, United Kingdom, and Sirius Mathematics Center, Sirius University of Science and Technology, 1 Olympic Ave, 354340, Sochi, Russia}
\email{a.laptev@imperial.ac.uk}

\author{Timo Weidl}
\address[Timo Weidl]{Institut f\"ur Analysis, Dynamik und Modellierung, Universit\"at Stutt\-gart, Pfaffenwaldring 57, 70569 Stuttgart, Germany}
\email{weidl@mathematik.uni-stuttgart.de}

\renewcommand{\thefootnote}{${}$} \footnotetext{\copyright\, 2022 by the authors. This paper may be reproduced, in its entirety, for non-commercial purposes.\\
	Partial support through US National Science Foundation grants DMS-1954995 (R.L.F.), as well as through the Deutsche Forschungsgemeinschaft (DFG, German Research Foundation) through Germany’s Excellence Strategy EXC-2111-390814868 (R.L.F.) is acknowledged.\\
The authors wish to express their thanks to Simon Larson for discussions on the topic of this paper.}

\begin{abstract}
	We prove a one-dimensional Hardy inequality on the halfline with sharp constant, which improves the classical form of this inequality. As a consequence of this new inequality we can rederive known doubly weighted Hardy inequalities. Our motivation comes from the theory of Schr\"odinger operators and we explain the use of Hardy inequalities in that context.
\end{abstract}

\dedicatory{Dedicated, in admiration, to V.\ Maz'ya on the occasion of his 85th birthday}

\maketitle

\section{Introduction}

The celebrated Hardy inequality states that, if $1<p<\infty$ and if $u$ is a locally absolutely continuous function on $(0,\infty)$ with $\liminf_{r\to 0} |u(r)|=0$, then
\begin{equation}
	\label{eq:hardyintro}
	\int_0^\infty \frac{|u(r)|^p}{r^p}\,dr \leq \left(\frac{p}{p-1} \right)^p \int_0^\infty |u'(r)|^p\,dr \,.
\end{equation}
The constant on the right side is best possible. For background on this inequality and its generalizations we refer, for instance, to \cite{OpKu,Da,KuMaPe}.

Our basic result in this paper is the following improvement of \eqref{eq:hardyintro}.

\begin{theorem}\label{mainintro}
	Let $1<p<\infty$. Then, for any locally absolutely continuous function $u$ on $(0,\infty)$ with $\liminf_{r\to 0} |u(r)|=0$,
	\begin{equation}
		\label{eq:hardyintromain}
		\int_0^\infty \max\left\{ \sup_{0< s\leq r} \frac{|u(s)|^p}{r^p}, \sup_{r\leq s<\infty} \frac{|u(s)|^p}{s^p} \right\} dr \leq \left(\frac{p}{p-1} \right)^p \int_0^\infty |u'(r)|^p\,dr \,.
	\end{equation}
\end{theorem}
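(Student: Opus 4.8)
The plan is to reduce to the case of a nonnegative nondecreasing $u$ with $u(0^+)=0$, then establish an exact identity by a distribution-function computation, and finally conclude with H\"older's inequality.

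\emph{Reductions.} We may assume $\int_0^\infty|u'|^p\,dr<\infty$, since otherwise there is nothing to prove. The left-hand side of \eqref{eq:hardyintromain} depends only on $|u|$, and passing from $u$ to $|u|$ does not increase the right-hand side, so we may assume $u\ge0$. Replacing $u$ by its running maximum $v(r):=\sup_{0<s\le r}u(s)$ preserves local absolute continuity, makes $v$ nondecreasing with $0\le v'\le|u'|$ a.e.\ (so the right-hand side does not increase), and does not decrease the left-hand side, since $\sup_{0<s\le r}v(s)=v(r)$ and
\[
\sup_{r\le s<\infty}\frac{v(s)^p}{s^p}\ \ge\ \max\Big\{\frac{v(r)^p}{r^p},\ \sup_{r\le s<\infty}\frac{u(s)^p}{s^p}\Big\}.
\]
Finally, if $v(0^+)>0$ then, since $\liminf_{r\to0}u(r)=0$, the function $u$ oscillates by a fixed positive amount on a sequence of intervals shrinking to $0$, forcing $\int_0^\infty|u'|^p\,dr=\infty$; hence $v(0^+)=0$. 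Renaming $v$ as $u$, it suffices to prove \eqref{eq:hardyintromain} assuming in addition that $u$ is nonnegative, nondecreasing and $u(0^+)=0$. Under these assumptions the left-hand side of \eqref{eq:hardyintromain} equals $\int_0^\infty f(r)^p\,dr$, where
\[
f(r):=\sup_{r\le s<\infty}\frac{u(s)}{s}
\]
is nonincreasing with $f(r)\le r^{-1/p}\big(\int_0^\infty|u'|^p\,dr\big)^{1/p}$; in particular $0\le f(r)<\infty$ for $r>0$ and $f(r)\to0$ as $r\to\infty$.

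\emph{The key identity.} The crucial point is that
\[
\int_0^\infty f(r)^{p-1}u'(r)\,dr=\frac{p-1}{p}\int_0^\infty f(r)^p\,dr .
\]
To see this, put $\rho(t):=\sup\{s>0:\ u(s)>ts\}$ for $t>0$ (and $\rho(t):=0$ if this set is empty). Since $f$ is nonincreasing, $\{r:f(r)>t\}$ is an interval with right endpoint $\rho(t)$, so $|\{f>t\}|=\rho(t)$; moreover $u(\rho(t))=t\rho(t)$, because $u(s)\le ts$ for $s>\rho(t)$ while $\rho(t)$ is approached by points with $u(s)>ts$ and $u$ is continuous. Using $u(0^+)=0$, this gives $\int_{\{f>t\}}u'(r)\,dr=u(\rho(t))=t\rho(t)$. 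Now write $f(r)^{p-1}=(p-1)\int_0^{f(r)}t^{p-2}\,dt$ and apply Tonelli's theorem:
\[
\int_0^\infty f(r)^{p-1}u'(r)\,dr=(p-1)\int_0^\infty t^{p-2}\Big(\int_{\{f>t\}}u'(r)\,dr\Big)dt=(p-1)\int_0^\infty t^{p-1}\rho(t)\,dt,
\]
whereas the layer-cake formula gives $\int_0^\infty f(r)^p\,dr=p\int_0^\infty t^{p-1}\rho(t)\,dt$. Comparing the two proves the identity (all integrals being allowed a priori to equal $+\infty$).

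\emph{Conclusion.} If $\int_0^\infty f^p\,dr<\infty$, then the identity together with H\"older's inequality gives $\frac{p-1}{p}\int_0^\infty f^p\,dr\le\big(\int_0^\infty f^p\,dr\big)^{1-1/p}\big(\int_0^\infty|u'|^p\,dr\big)^{1/p}$, and dividing produces \eqref{eq:hardyintromain}. To remove the finiteness hypothesis I would first prove the inequality for the truncations $u_k(r):=\big(u(\min\{r,k\})-u(1/k)\big)_+$ with $k\ge2$: these are again of the type produced by the reductions, satisfy $\int_0^\infty|u_k'|^p\,dr\le\int_0^\infty|u'|^p\,dr$, vanish on $(0,1/k]$ and are constant on $[k,\infty)$, so the associated $f_k$ is bounded near $0$ and of order $1/r$ near infinity; hence $\int_0^\infty f_k^p\,dr<\infty$ and the argument above yields $\int_0^\infty f_k^p\,dr\le(p/(p-1))^p\int_0^\infty|u'|^p\,dr$. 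Since $u_k\uparrow u$ pointwise we have $f_k\uparrow f$, and monotone convergence finishes the proof. I expect the main obstacle to lie in the second step --- carefully identifying $|\{f>t\}|$ and showing $u(\rho(t))=t\rho(t)$ --- while the reductions and the truncation/approximation bookkeeping, though somewhat lengthy, are routine.
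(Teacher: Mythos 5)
Your proof is correct, and it takes a genuinely different route from the paper's. The paper works in the integral form with $f=u'$, replaces $f$ by its nonincreasing rearrangement $f^*$ (the left side can only increase, by the simple rearrangement inequality, while the right side is preserved by equimeasurability), and notes that the resulting antiderivative $s\mapsto\int_0^s f^*$ is concave with value $0$ at $0$, so that $\sup_{s\ge r}\frac{1}{s}\int_0^s f^*=\frac{1}{r}\int_0^r f^*$; the claim then reduces directly to the classical Hardy inequality \eqref{eq:hardyintro}. You instead monotonize $u$ itself via the running maximum rather than rearranging $u'$, which also makes $\sup_{0<s\le r}$ trivial but does \emph{not} give concavity, so $f(r):=\sup_{s\ge r}u(s)/s$ need not equal $u(r)/r$ and the classical inequality cannot be invoked as a black box. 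Your substitute is the exact identity $\int_0^\infty f^{p-1}u'\,dr=\frac{p-1}{p}\int_0^\infty f^p\,dr$, proved by the distribution-function computation $|\{f>t\}|=\rho(t)$ together with $u(\rho(t))=t\rho(t)$, followed by H\"older and a truncation to handle the a priori finiteness of $\int f^p$. The paper's argument is shorter and leans on \eqref{eq:hardyintro}; yours is self-contained and in effect re-derives the sharp Hardy constant in the form adapted to the running supremum. Both proofs hinge on reducing the double supremum to a single monotone quantity, but they obtain the needed monotonicity from opposite ends, the derivative versus the function.
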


Inequality \eqref{eq:hardyintromain} is clearly an improvement of \eqref{eq:hardyintro} since
$$
\max\left\{ \sup_{0< s\leq r} \frac{|u(s)|^p}{r^p}, \sup_{r\leq s<\infty} \frac{|u(s)|^p}{s^p} \right\} \geq  \frac{|u(r)|^p}{r^p} \,.
$$
Remarkably, the constant in \eqref{eq:hardyintromain} is the same as that in \eqref{eq:hardyintro}. 

Surprisingly, given how natural \eqref{eq:hardyintromain} is, we have not been able to locate an earlier occurence in the literature. In the case $p=2$ it appeared recently in our book \cite{FrLaWe}. Here we show that the same proof extends to arbitrary $p$; see Section \ref{sec:main}. The proof will use \eqref{eq:hardyintro} as an ingredient, together with a simple rearrangement argument.

In the remaining sections of this paper, we will draw some conclusions from \eqref{eq:hardyintromain}. Let us summarize the most important ones. Indeed, bounding the maximum in \eqref{eq:hardyintromain} by either one of the two quantities, we arrive at the two inequalities
\begin{equation}
	\label{eq:hardyintromain1}
	\int_0^\infty \sup_{0< s\leq r} \frac{|u(s)|^p}{r^p} \,dr \leq \left(\frac{p}{p-1} \right)^p \int_0^\infty |u'(r)|^p\,dr
\end{equation}
and
\begin{equation}
	\label{eq:hardyintromain2}
	\int_0^\infty \sup_{r\leq s<\infty} \frac{|u(s)|^p}{s^p} \,dr \leq \left(\frac{p}{p-1} \right)^p \int_0^\infty |u'(r)|^p\,dr \,,
\end{equation}
valid for the same class of functions $u$ as in Theorem \ref{mainintro}. Again, we have not found \eqref{eq:hardyintromain1} and \eqref{eq:hardyintromain2} stated explicitly in the literature. We will show here, however, that they are equivalent to certain inequalities that are known. Specifically,  \eqref{eq:hardyintromain1} and \eqref{eq:hardyintromain2} are equivalent, respectively, to the following two weighted inequalities,
\begin{equation}
	\label{eq:hardyintromainweight1}
	\int_0^\infty W(r) |u(r)|^p \,dr \leq \frac{p^p}{(p-1)^{p-1}} 
	\left( \sup_{s>0} s^{p-1} \int_s^\infty W(t)\,dt \right) \int_0^\infty |u'(r)|^p\,dr
\end{equation}
and
\begin{equation}
	\label{eq:hardyintromainweight2}
	\int_0^\infty W(r) |u(r)|^p \,dr \leq \left(\frac{p}{p-1} \right)^p 
	\left( \sup_{s>0} s^{-1} \int_0^s W(t) t^p \,dt \right) \int_0^\infty |u'(r)|^p\,dr \,,
\end{equation}
valid for all nonnegative, measurable functions $W$ on $(0,\infty)$ and all $u$ as above. Note that for $W(r) = r^{-p}$, \eqref{eq:hardyintromainweight1} and \eqref{eq:hardyintromainweight2} both reduce to \eqref{eq:hardyintro}. Inequality \eqref{eq:hardyintromainweight1} for $p=2$ is due to Kac and Kre\u{\i}n \cite{KaKr}. We review their proof in Section \ref{sec:direct} and supplement it with a direct proof of \eqref{eq:hardyintromainweight2} for $p=2$. Returning to general $p$, both inequalities \eqref{eq:hardyintromainweight1} and \eqref{eq:hardyintromainweight2} are special cases of a more general family of inequalities due to Tomaselli \cite{To}, which we discuss momentarily.

In order to show that inequalities \eqref{eq:hardyintromain1} and \eqref{eq:hardyintromain2} are equivalent to \eqref{eq:hardyintromainweight1} and \eqref{eq:hardyintromainweight2}, we employ a duality argument. This is presented in Section \ref{sec:duality}.

Next, by a well-known change of variables argument, we see that \eqref{eq:hardyintromainweight1} and \eqref{eq:hardyintromainweight2} are equivalent to the following doubly weighted inequalities.

\begin{theorem}\label{weightd}
	Let $1<p<\infty$ and let $V,W$ be nonnegative, a.e.-finite, measurable functions on $(0,\infty)$ such that
	$$
	\int_0^s V(t)^{-\frac{1}{p-1}}\,dt <\infty
	\qquad\text{for all}\ s\in(0,\infty) \,.
	$$
	Then, for any locally absolutely continuous function $u$ on $(0,\infty)$ with $\liminf_{r\to 0} |u(r)|\!=0$,
	\begin{equation}
		\label{eq:hardyintromainweightd1}
		\int_0^\infty W(r) |u(r)|^p \,dr \leq \frac{p^p}{(p-1)^{p-1}} 
		\overline B \int_0^\infty V(r) |u'(r)|^p\,dr
	\end{equation}
	and
	\begin{equation}
		\label{eq:hardyintromainweightd2}
		\int_0^\infty W(r) |u(r)|^p \,dr \leq \left(\frac{p}{p-1} \right)^p \underline B \int_0^\infty V(r) |u'(r)|^p\,dr
	\end{equation}
	with
	\begin{equation}
		\label{eq:overb}
		\overline B := \sup_{s>0} \left( \int_0^s V(t)^{-\frac1{p-1}}\,dt \right)^{p-1} \left( \int_s^\infty W(t)\,dt \right)
	\end{equation}
	and
	\begin{equation}
		\label{eq:underb}
		\underline B := \sup_{s>0} \left( \int_0^s V(t)^{-\frac{1}{p-1}}dt \right)^{-1} \int_0^s W(t) \left( \int_0^t V(t')^{-\frac{1}{p-1}}dt' \right)^p dt \,.
	\end{equation}
\end{theorem}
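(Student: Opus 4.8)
The plan is to deduce Theorem \ref{weightd} from inequalities \eqref{eq:hardyintromainweight1} and \eqref{eq:hardyintromainweight2} by the ``well-known change of variables argument'' already advertised in the text. Concretely, given $V$ and $u$ as in the statement, I would introduce the new independent variable
\[
\rho(r) := \int_0^r V(t)^{-\frac{1}{p-1}}\,dt \,,
\]
which by hypothesis is finite for every $r\in(0,\infty)$, is absolutely continuous and strictly increasing, hence a bijection of $(0,\infty)$ onto an interval $(0,L)$ with $L\in(0,\infty]$. I would set $v(\rho) := u(r(\rho))$, where $r(\cdot)$ is the inverse function. The point of this substitution is the identity
\[
\int_0^\infty V(r)\,|u'(r)|^p\,dr = \int_0^L |v'(\rho)|^p\,d\rho \,,
\]
which follows from $\rho'(r) = V(r)^{-1/(p-1)}$ and the chain rule $u'(r) = v'(\rho)\,\rho'(r)$, together with a change of variables $dr = (\rho'(r))^{-1}\,d\rho = V(r)^{1/(p-1)}\,d\rho$. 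One checks that $v$ is locally absolutely continuous on $(0,L)$ and inherits $\liminf_{\rho\to 0}|v(\rho)| = 0$ from the corresponding property of $u$. (If $L<\infty$ one can extend $v$ to all of $(0,\infty)$ by setting $v(\rho) = v(L^-)$ for $\rho\geq L$; this does not increase the right-hand side and, since the new weight will be supported in $(0,L)$, does not affect the left-hand side either.)

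For the left-hand sides, I would push the weight $W$ through the same change of variables. Writing $\widetilde W(\rho) := W(r(\rho))\,V(r(\rho))^{1/(p-1)}$ for $\rho\in(0,L)$ (and $\widetilde W\equiv 0$ on $[L,\infty)$ if $L<\infty$), the substitution $r = r(\rho)$ gives
\[
\int_0^\infty W(r)\,|u(r)|^p\,dr = \int_0^L \widetilde W(\rho)\,|v(\rho)|^p\,d\rho = \int_0^\infty \widetilde W(\rho)\,|v(\rho)|^p\,d\rho \,.
\]
Now I apply \eqref{eq:hardyintromainweight1} and \eqref{eq:hardyintromainweight2} to $v$ and $\widetilde W$ on $(0,\infty)$. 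It remains to identify the resulting constants. For \eqref{eq:overb}: by the same substitution,
\[
\int_s^\infty \widetilde W(\rho)\,d\rho = \int_{r(s)}^\infty W(t)\,dt \qquad (s\in(0,L)) \,,
\]
while $s = \rho(r(s)) = \int_0^{r(s)} V(t)^{-1/(p-1)}\,dt$, so that
\[
\sup_{s>0} s^{p-1}\int_s^\infty \widetilde W(\rho)\,d\rho
= \sup_{\sigma>0}\left(\int_0^\sigma V(t)^{-\frac1{p-1}}\,dt\right)^{p-1}\int_\sigma^\infty W(t)\,dt = \overline B
\]
after renaming $\sigma := r(s)$ and noting that $\sigma$ ranges over all of $(0,\infty)$ as $s$ ranges over $(0,L)$ (the supremum over $s\geq L$, if $L<\infty$, adds nothing since $\widetilde W$ vanishes there). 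An entirely analogous computation, using $\int_0^s \widetilde W(\rho)\,\rho^p\,d\rho = \int_0^{r(s)} W(t)\left(\int_0^t V(t')^{-1/(p-1)}\,dt'\right)^p\,dt$ and dividing by $s = \int_0^{r(s)}V(t)^{-1/(p-1)}\,dt$, identifies the supremum in \eqref{eq:hardyintromainweight2} with $\underline B$ of \eqref{eq:underb}. Substituting these back yields \eqref{eq:hardyintromainweightd1} and \eqref{eq:hardyintromainweightd2}.

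The routine parts are the measure-theoretic bookkeeping: justifying the change of variables for locally absolutely continuous $u$ (which is standard, since $\rho$ is a locally absolutely continuous, strictly monotone bijection with locally absolutely continuous inverse on compact subintervals), and verifying that $v$ lies in the correct function class with the correct behavior at the origin. The one genuine subtlety I anticipate — and the step to handle with a little care — is the case $L = \int_0^\infty V(t)^{-1/(p-1)}\,dt < \infty$, where the transformed problem lives a priori only on the bounded interval $(0,L)$; one must check that the extension of $v$ by a constant beyond $L$ is harmless on both sides and that $\widetilde W$ extended by zero does not create a spurious contribution to $\overline B$ or $\underline B$. Once that is dispatched, the theorem follows immediately from \eqref{eq:hardyintromainweight1}–\eqref{eq:hardyintromainweight2}, which in turn were derived from Theorem \ref{mainintro}.
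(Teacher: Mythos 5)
Your proposal is correct and follows essentially the same route as the paper: the paper proves this via Lemma~\ref{cov} (Section~\ref{sec:cov}), which performs exactly the change of variables $\rho = \phi(r) = \int_0^r V^{-1/(p-1)}$, transfers $u$ and $W$ to $\tilde u = u\circ\psi$ and $\tilde W = (W\circ\psi)\,\psi'$, handles the case $L<\infty$ by extending $\tilde u$ by a constant and $\tilde W$ by zero, verifies the two integral identities, and then checks that the constants pull back to $\overline B$ and $\underline B$. The only detail you flag as routine that the paper spells out is the local absolute continuity of the inverse map, which requires that $\{\phi'=0\}$ (equivalently $\{V=\infty\}$) has measure zero — guaranteed by the hypothesis that $V$ is a.e.\ finite — and the fact that when $L<\infty$ the finiteness of $\int_0^L|v'|^p$ is what guarantees $v(L^-)$ exists; both points are implicit in your write-up but worth making explicit.
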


Note that for $V\equiv 1$, \eqref{eq:hardyintromainweightd1} and \eqref{eq:hardyintromainweightd2} reduce to \eqref{eq:hardyintromainweight1} and \eqref{eq:hardyintromainweight2}, respectively.

For the sake of completeness, we will provide in Section \ref{sec:cov} the details of the change of variables argument that proves the equivalence of \eqref{eq:hardyintromainweightd1} and \eqref{eq:hardyintromainweightd2} with \eqref{eq:hardyintromainweight1} and \eqref{eq:hardyintromainweight2}, respectively. In that section we will also recall the well-known fact that the validity of inequalities \eqref{eq:hardyintromainweightd1} and \eqref{eq:hardyintromainweightd2} with \emph{some} constant implies that finiteness of $\overline B$ and $\underline B$ as defined in \eqref{eq:overb} and \eqref{eq:underb}. This implies, in particular, that $\overline B$ is finite if and only if $\underline B$ is finite, and that they are comparable.

Let us discuss the history of Theorem \ref{weightd}. Both inequalities \eqref{eq:hardyintromainweightd1} and \eqref{eq:hardyintromainweightd2} appear in Tomaselli's paper \cite{To}, see equations (9) and (10) there; see also equations (27') and (27'') in the exposition \cite{Ta} of Tomaselli's work. An independent, very elegant proof of inequality \eqref{eq:hardyintromainweightd1} was given by Muckenhoupt \cite{Mu}. Apparently, most of the relevant textbooks put their focus on inequality \eqref{eq:hardyintromainweightd1} and gloss over \eqref{eq:hardyintromainweightd2}; see, for instance, \cite[Theorem 1.3.2/1]{Ma} and \cite[Theorem 1.14]{OpKu}. Inequality \eqref{eq:hardyintromainweightd2} is only briefly mentioned without proof in \cite[Section 2.8]{OpKu}. Tomaselli's proof of both inequalities is based on the analysis of certain ordinary differential equations, while Muckenhoupt's proof of \eqref{eq:hardyintromainweightd1} is based on Minkowski's inequality. It is not clear to us whether this latter method of proof can be used to establish \eqref{eq:hardyintromainweightd2}. Our proof of Theorem \ref{weightd} via Theorem \ref{mainintro} and duality seems to be new.

As mentioned before, the constants $\overline B$ and $\underline B$ are comparable. In this comparability, however, some constants appear, which destroy the optimality of the inequalities and which are not acceptable in certain applications where these constants matter. From the point of view of these applications, inequalities \eqref{eq:hardyintromainweightd1} and \eqref{eq:hardyintromainweightd2} are not equivalent and one might be better suited in one problem and one in another. The main difference between the two inequalities is that in the constant $\overline B$, $W$ is integrated near infinity, whereas in $\underline B$ it is integrated near the origin. This difference is crucial in applications and was the main motivation of our study.

To be more specific, our application concerns sharp conditions on the potential for a Schr\"odinger operator to have only a finite number of negative eigenvalues. We describe this in more detail in Section \ref{sec:so}. The relevance of Hardy inequalities for this kind of questions was emphasized by Birman \cite{Bi}.

To summarize our discussion so far, we have seen that the doubly weighted Hardy inequalities \eqref{eq:hardyintromainweightd1} and \eqref{eq:hardyintromainweightd2} are equivalent to inequalities \eqref{eq:hardyintromain1} and \eqref{eq:hardyintromain2}, respectively. Moreover, \eqref{eq:hardyintromain1} and \eqref{eq:hardyintromain2} are both consequences of the new inequality in Theorem \ref{mainintro}.

A natural question, which we have not been able to answer, is to find a doubly weighted Hardy inequality that is equivalent to \eqref{eq:hardyintromain}. This is an \emph{open problem}.

All inequalities that we have considered so far were for locally absolutely continuous functions $u$ on $(0,\infty)$ with $\liminf_{r\to 0}|u(r)|=0$. For the sake of completeness, let us also state the inequalities for locally absolutely continuous functions $u$ on $(0,\infty)$ with $\liminf_{r\to \infty}|u(r)|=0$. They are deduced from the former ones by the change of variables $r\mapsto r^{-1}$ and read as follows.

\begin{theorem}\label{weightdinv}
	Let $1<p<\infty$ and let $V,W$ be nonnegative, a.e.-finite, measurable functions on $(0,\infty)$ such that
	$$
	\int_s^\infty V(t)^{-\frac{1}{p-1}}dt <\infty
	\qquad\text{for all}\ s\in (0,\infty) \,.
	$$
	Then for any locally absolutely continuous function $u$ on $(0,\infty)$ with $\liminf_{r\to \infty}|u(r)|\!=0$,
	\begin{equation}
		\label{eq:hardyintromainweightdinv1}
		\int_0^\infty W(r) |u(r)|^p \,dr \leq \frac{p^p}{(p-1)^{p-1}} 
		\overline B' \int_0^\infty V(r) |u'(r)|^p\,dr
	\end{equation}
	and
	\begin{equation}
		\label{eq:hardyintromainweightdinv2}
		\int_0^\infty W(r) |u(r)|^p \,dr \leq \left(\frac{p}{p-1} \right)^p \underline B' \int_0^\infty V(r) |u'(r)|^p\,dr \,,
	\end{equation}
	with
	\begin{equation}
		\overline B' := \sup_{s>0} \left( \int_s^\infty V(t)^{-\frac1{p-1}}\,dt \right)^{p-1} \left( \int_0^s W(t)\,dt \right)
	\end{equation}
	and
	\begin{equation}
		\underline B' := \sup_{s>0} \left( \int_s^\infty V(t)^{-\frac{1}{p-1}}dt \right)^{-1} \int_s^\infty W(t) \left( \int_t^\infty V(t')^{-\frac{1}{p-1}}dt' \right)^p dt \,.
	\end{equation}
\end{theorem}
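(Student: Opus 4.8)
The plan is to deduce Theorem~\ref{weightdinv} from Theorem~\ref{weightd} by the change of variables $r\mapsto r^{-1}$, exactly as announced before the statement. Given $u$ as in Theorem~\ref{weightdinv}, I would set $v(r):=u(1/r)$ for $r\in(0,\infty)$. Since $t\mapsto 1/t$ is a smooth diffeomorphism of $(0,\infty)$ onto itself, $v$ is again locally absolutely continuous, the chain rule for absolutely continuous functions gives $v'(r)=-r^{-2}u'(1/r)$ for a.e.\ $r$, and $\liminf_{r\to 0}|v(r)|=\liminf_{s\to\infty}|u(s)|=0$. Hence $v$ is an admissible test function in Theorem~\ref{weightd}.

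Next I would introduce the transformed weights
$$
\widetilde W(r):=r^{-2}\,W(1/r), \qquad \widetilde V(r):=r^{2p-2}\,V(1/r), \qquad r\in(0,\infty).
$$
The exponents here are chosen so that the Jacobian factors cancel. Concretely, $\widetilde V(t)^{-1/(p-1)}=t^{-2}V(1/t)^{-1/(p-1)}$, so the substitution $t=1/\tau$ gives, for every $s>0$,
$$
\int_0^s \widetilde V(t)^{-\frac1{p-1}}\,dt=\int_{1/s}^\infty V(\tau)^{-\frac1{p-1}}\,d\tau \,,
$$
which shows that the hypothesis $\int_s^\infty V(t)^{-1/(p-1)}\,dt<\infty$ of Theorem~\ref{weightdinv} is precisely the hypothesis $\int_0^s \widetilde V(t)^{-1/(p-1)}\,dt<\infty$ needed to apply Theorem~\ref{weightd} to $v$ with the weights $\widetilde V,\widetilde W$.

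Now I would apply \eqref{eq:hardyintromainweightd1} and \eqref{eq:hardyintromainweightd2} to $v$, $\widetilde V$, $\widetilde W$, and change variables back via $r=1/s$ on both sides. On the left, $\int_0^\infty \widetilde W(r)|v(r)|^p\,dr=\int_0^\infty W(s)|u(s)|^p\,ds$, and on the right, $\int_0^\infty \widetilde V(r)|v'(r)|^p\,dr=\int_0^\infty V(s)|u'(s)|^p\,ds$, since in the latter the factor $r^{2p-2}\cdot r^{-2p}\cdot r^2=r^{0}$ (with the $r^2$ coming from $dr=-s^{-2}ds$) leaves exactly $V(s)|u'(s)|^p$. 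It then remains to identify the constants: using $\int_s^\infty \widetilde W(t)\,dt=\int_0^{1/s}W(\tau)\,d\tau$ together with the formula above for $\int_0^s\widetilde V^{-1/(p-1)}$, and reparametrizing the supremum by $\sigma=1/s$, one gets
$$
\overline B(\widetilde V,\widetilde W)=\sup_{\sigma>0}\Bigl(\int_\sigma^\infty V(\tau)^{-\frac1{p-1}}d\tau\Bigr)^{p-1}\Bigl(\int_0^\sigma W(\tau)\,d\tau\Bigr)=\overline B' \,,
$$
and the analogous substitution (applied also in the inner integral) yields $\underline B(\widetilde V,\widetilde W)=\underline B'$. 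Plugging these into \eqref{eq:hardyintromainweightd1}--\eqref{eq:hardyintromainweightd2} gives \eqref{eq:hardyintromainweightdinv1}--\eqref{eq:hardyintromainweightdinv2}.

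I do not expect a genuine obstacle here: the entire argument is the bookkeeping of a single reflection $r\mapsto 1/r$, and the only things requiring care are the powers of $r$ in the definitions of $\widetilde V$ and $\widetilde W$ (so that all Jacobian factors cancel) and the reversal of the limits of integration, which is exactly what turns "integration of $W$ near the origin/infinity" in $\overline B,\underline B$ into "integration of $W$ near infinity/the origin" in $\overline B',\underline B'$. One should also note in passing that, as recalled in Section~\ref{sec:cov}, the very existence of such inequalities forces $\overline B'$ and $\underline B'$ to be finite, so the statement is nonvacuous exactly when these constants are finite.
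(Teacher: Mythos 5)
Your proposal is correct and is exactly the argument the paper intends: the paper explicitly states that Theorem~\ref{weightdinv} is deduced from Theorem~\ref{weightd} by the substitution $r\mapsto r^{-1}$ and supplies no further details. Your bookkeeping of the transformed weights $\widetilde V,\widetilde W$, the Jacobian factors, and the reparametrization of the suprema is complete and accurate, so nothing is missing.
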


Note that the roles of $0$ and $\infty$ in the definitions of $\overline B'$ and $\underline B'$ have changed relative to $\overline B$ and $\underline B$, but so has the point where the `boundary condition' on $u$ is imposed.

In the remainder of this paper, we will provide the proofs of the claims made in this introduction.

It is with great admiration and respect that we dedicate this paper to V. Maz'ya, who has shaped our understanding of Sobolev spaces and Schr\"odinger operators.


\section{The main inequality}\label{sec:main}

In this section we prove our main result, Theorem \ref{mainintro}. It will be somewhat more convenient to work with Hardy inequalities in an equivalent integral rather than differential form.

\begin{theorem}\label{mainintegral}
	Let $1<p<\infty$. Then, for any $f\in L^p(0,\infty)$,
	\begin{equation}
		\label{eq:main}
		\int_0^\infty \sup_{0<s<\infty} \left| \min\left\{ \frac1r,\frac1s \right\} \int_0^s f(t)\,dt \right|^p dr \leq \left( \frac{p}{p-1}\right)^p \int_0^\infty |f(r)|^p\,dr \,.
	\end{equation}
\end{theorem}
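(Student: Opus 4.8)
The plan is to read the left side of \eqref{eq:main} as a Hardy-type functional of the primitive $F(r):=\int_0^r f(t)\,dt$ and to reduce it to the classical inequality \eqref{eq:hardyintro} by a rearrangement of $f$.

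\emph{Reduction and identification of the integrand.} Since $|\int_0^s f|\le\int_0^s|f|$, replacing $f$ by $|f|$ only enlarges the left side of \eqref{eq:main} while leaving the right side unchanged, so I may assume $f\ge 0$, and (discarding the trivial case) $f\not\equiv 0$. Writing $h(s):=F(s)/s$ and using that $F$ is nondecreasing, the map $s\mapsto\min\{1/r,1/s\}F(s)$ equals $F(s)/r$ on $(0,r]$ (nondecreasing there) and equals $h(s)$ on $[r,\infty)$, so the integrand of \eqref{eq:main} is $G(r)^p$ with
\[
G(r):=\sup_{0<s<\infty}\min\Bigl\{\tfrac1r,\tfrac1s\Bigr\}F(s)=\sup_{s\ge r}h(s).
\]
Thus $G$ is the least nonincreasing majorant of the continuous function $h$; it is finite on $(0,\infty)$ because $h$ is bounded on each $[r,\infty)$.

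\emph{A rising-sun decomposition of $h$.} The set $U:=\{r>0:G(r)>h(r)\}$ is open, hence a countable disjoint union of intervals $(a_k,b_k)$. Hölder's inequality gives $h(s)\le s^{-1/p}\|f\|_p\to0$ as $s\to\infty$, so for every $r_0>0$ the supremum $\sup_{[r_0,\infty)}h$ is attained; any point attaining it lies in the contact set $(0,\infty)\setminus U$, which is therefore unbounded, and consequently every $b_k<\infty$. I would then show that $G$ is constant, equal to $h(b_k)$, on each $(a_k,b_k)$: if $h$ took a value $m>h(b_k)$ somewhere on $[r,b_k)$ for some $r\in(a_k,b_k)$, a maximiser $s^\ast$ of $h$ over $[r,b_k]$ would satisfy $s^\ast\in(a_k,b_k)\subset U$, so $\sup_{s\ge s^\ast}h(s)=G(s^\ast)>h(s^\ast)=m$, whereas $\sup_{[s^\ast,b_k]}h\le m$ and $\sup_{[b_k,\infty)}h\le G(b_k)=h(b_k)<m$ force $\sup_{s\ge s^\ast}h(s)\le m$ — a contradiction.

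\emph{The rearrangement and the conclusion.} Let $\tilde f$ agree with $f$ off $U$ and, on each component $(a_k,b_k)$, be the nonincreasing rearrangement of $f|_{(a_k,b_k)}$ on that interval; put $\tilde F(r):=\int_0^r\tilde f$ and $\tilde h:=\tilde F/r$. Being equimeasurable with $f$ on every $(a_k,b_k)$ and equal elsewhere, $\tilde f$ has $\|\tilde f\|_p=\|f\|_p$, and $\tilde F=F$ on the contact set and at every $b_k$. On the contact set $\tilde h=h=G$. On a component $(a_k,b_k)$, monotonicity of $\tilde f$ gives $\int_{a_k}^r\tilde f\ge\frac{r-a_k}{b_k-a_k}\bigl(F(b_k)-F(a_k)\bigr)$, so $\tilde h(r)$ is bounded below on $(a_k,b_k)$ by a function of the form $A/r+B$; this lower bound is monotone in $r$, with endpoint values $h(a_k)=G(a_k)\ge G(b_k)=h(b_k)$ and $h(b_k)$ (when $a_k=0$ it is the constant $h(b_1)$), hence $\tilde h\ge h(b_k)=G$ on $(a_k,b_k)$. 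Therefore $\tilde h\ge G$ on $(0,\infty)$, and \eqref{eq:hardyintro} applied to $u=\tilde F$ (locally absolutely continuous, $\tilde F(0)=0$) yields
\[
\int_0^\infty G^p\,dr\le\int_0^\infty\tilde h^p\,dr=\int_0^\infty\frac{|\tilde F(r)|^p}{r^p}\,dr\le\Bigl(\frac{p}{p-1}\Bigr)^p\int_0^\infty|\tilde f|^p\,dr=\Bigl(\frac{p}{p-1}\Bigr)^p\int_0^\infty|f|^p\,dr,
\]
which is \eqref{eq:main}. The crux is the construction of a rearrangement with $\tilde h\ge G$, and within that, the two structural facts above: that the components of $U$ are bounded, and that $G$ is constant on each of them with the boundary values of $h$.
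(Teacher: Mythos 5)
Your proof is correct, but it takes a genuinely different route from the paper's. The paper's argument is considerably shorter: replace $f$ by its \emph{global} nonincreasing rearrangement $f^*$, note that $\bigl|\int_0^s f\bigr|\le\int_0^s|f|\le\int_0^s f^*$ (so the left side of \eqref{eq:main} can only increase), and observe that for nonincreasing $f^*$ the supremum collapses, since $\tfrac1s\int_0^s f^*\le\tfrac1r\int_0^r f^*$ for $s\ge r$, giving $\sup_{0<s<\infty}\min\{1/r,1/s\}\int_0^s f^*=\tfrac1r\int_0^r f^*$; the classical Hardy inequality then finishes. You instead identify the integrand as $G(r)^p$, where $G$ is the least nonincreasing majorant of $h(r)=F(r)/r$, perform a rising-sun decomposition of $\{G>h\}$, and rearrange $f$ \emph{locally} on each component to produce $\tilde f$ with $\|\tilde f\|_p=\|f\|_p$ and $\tilde F/r\ge G$. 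Your construction is correct (the key facts — boundedness of the components, constancy of $G=h(b_k)$ on each, and the convexity-type lower bound for $\tilde F$ on each component — all check out; you should note in passing that $G$ is continuous, which justifies the openness of $U$ and in fact also forces $h(a_k)=h(b_k)$). The paper's single global rearrangement bypasses all of this machinery, while yours makes the rising-sun structure of the supremand explicit and only modifies $f$ on the set where the majorant is not attained; both approaches rest on the same two ingredients, classical Hardy and a rearrangement inequality.
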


Clearly, the correspondence $\int_0^s f(t)\,dt = u(s)$, $f(s)=u'(s)$ gives the equivalence between Theorems \ref{mainintro} and \ref{mainintegral}.

\begin{proof}
	We denote by $f^*$ the nonincreasing rearrangement of $f$. This is a nonincreasing, nonnegative function on $(0,\infty)$ such that $\{ |f|>\tau\}|=|\{f^*>\tau\}|$ for all $\tau>0$. For more on this rearrangement and, in particular, the following two simple properties that we will use, we refer, for instance, to \cite[Section 2.1]{BeSh}. On the one hand, by the equimeasurability property,
	$$
	\int_0^\infty |f(r)|^p\,dr = \int_0^\infty (f^*(r))^p\,dr \,.
	$$
	On the other hand, by the simplest rearrangment inequality, for any $s>0$,
	$$
	\left| \int_0^s f(t)\,dt \right| \leq \int_0^s |f(t)|\,dt \leq \int_0^s f^*(t)\,dt \,.
	$$
	Thus, for any $r>0$,
	$$
	\sup_{0<s<\infty} \left| \min\left\{ \frac1r,\frac1s \right\} \int_0^s f(t)\,dt \right|
	\leq  \sup_{0<s<\infty} \min\left\{ \frac1r,\frac1s \right\} \int_0^s f^*(t)\,dt \,.
	$$
	As a consequence, if we can prove the inequality for $f^*$, it holds also for $f$.
	
	The advantage of $f^*$ is that the supremum can be computed. Indeed, since $f^*$ is nonincreasing, we have for all $r\leq s$ and all $t>0$, $f^*(t)\leq f^*(rt/s)$, so
	$$
	\frac1s \int_0^s f^*(t)\,dt \leq \frac1s \int_0^s f^*(rt/s)\,dt = \frac1r \int_0^r f^*(u)\,du \,.
	$$
	Thus,
	$$
	\sup_{r\leq s<\infty} \frac{1}{s} \int_0^s f^*(t)\,dt = \frac{1}{r} \int_0^r f^*(t)\,dt
	$$
	and, therefore,
	$$
	\sup_{0<s<\infty} \min\left\{ \frac1r,\frac1s \right\} \int_0^s f^*(t)\,dt = \frac 1r \int_0^r f^*(t)\,dt \,.
	$$
	Thus, \eqref{eq:main} for $f^*$ follows from the standard Hardy's inequality \eqref{eq:hardyintro} or, more precisely, its equivalent integral form.
\end{proof}


\section{A duality result}\label{sec:duality}

Our goal in this section is to show that the Hardy inequalities \eqref{eq:hardyintromain1} and \eqref{eq:hardyintromain2} are equivalent to the weighted Hardy inequalities \eqref{eq:hardyintromainweight1} and \eqref{eq:hardyintromainweight2}. As mentioned in the introduction, this argument relies on a duality result, which we now state and prove.

For parameters $\alpha,\beta>0$ and for nonnegative, a.e.-finite, measurable functions $f,g$ on $(0,\infty)$ we set
\begin{align*}
	\underline{\mu}_{\alpha}(f) & := \int_0^\infty\text{ess-sup}_{0<s\leq r} f(s)\,\frac{dr}{r^{1+\alpha}} \,, &
	\overline{\mu}_{\beta}(f) & := \int_0^\infty\text{ess-sup}_{r\leq s<\infty} f(s)\,\frac{dr}{r^{1-\beta}} \,, \\
	\overline{\nu}_\alpha(g) &:= \text{sup}_{r>0} r^{\alpha} \int_r^\infty g(s) \,ds \,, &
	\underline{\nu}_\beta(g) & := \text{sup}_{r>0} r^{-\beta} \int_0^r g(s) \,ds \,.
\end{align*}

\begin{theorem}\label{dual}
	Let $\alpha,\beta>0$. Then, for any nonnegative, measurable function $f$ on $(0,\infty)$,
	\begin{equation}
		\label{eq:dual1}
		\sup\left\{ \int_0^\infty fg\,dr:\ g\geq 0 \,,\ \overline\nu_\alpha(g) \leq 1 \right\} = \alpha\, \underline\mu_\alpha(f)
	\end{equation}
	and
	\begin{equation}
		\label{eq:dual2}
		\sup\left\{ \int_0^\infty fg\,dr:\ g\geq 0\,,\ \underline\nu_\beta(g) \leq 1 \right\} = \beta\, \overline\mu_\beta(f) \,,
	\end{equation}
	and conversely, for any nonnegative, measurable function $g$ on $(0,\infty)$,
	\begin{equation}
		\label{eq:dual3}
		\sup\left\{ \int_0^\infty fg\,dr:\ f\geq 0\,,\ \underline\mu_\alpha(f) \leq 1 \right\} = \alpha\, \overline\nu_\alpha(g)
	\end{equation}
	and
	\begin{equation}
		\label{eq:dual4}
		\sup\left\{ \int_0^\infty fg\,dr:\ f\geq 0\,,\ \overline\mu_\beta(f) \leq 1 \right\} = \beta\, \underline\nu_\beta(g) \,.
	\end{equation}
\end{theorem}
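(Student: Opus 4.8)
The plan is to reduce all four identities to essentially one. First, I would use the substitution $r\mapsto r^{-1}$: it sends $f$ to $\tilde f(\rho):=f(1/\rho)$ and $g$ to $\tilde g(\rho):=\rho^{-2}g(1/\rho)$, is an involution of the cone of nonnegative measurable functions, preserves $\int_0^\infty fg\,dr=\int_0^\infty\tilde f\tilde g\,d\rho$, and satisfies $\overline\mu_\beta(f)=\underline\mu_\beta(\tilde f)$ and $\underline\nu_\beta(g)=\overline\nu_\beta(\tilde g)$; consequently \eqref{eq:dual2} and \eqref{eq:dual4} follow from \eqref{eq:dual1} and \eqref{eq:dual3} with $\beta$ in the role of $\alpha$. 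Second, I would note that the ``$\le$'' directions of \eqref{eq:dual1} and \eqref{eq:dual3} are one and the same estimate, namely
\[
\int_0^\infty fg\,dr\ \le\ \alpha\,\overline\nu_\alpha(g)\,\underline\mu_\alpha(f)\qquad\text{for all }f,g\ge0 ,
\]
because taking the supremum over $g$ with $\overline\nu_\alpha(g)\le1$ gives ``$\le$'' in \eqref{eq:dual1} and over $f$ with $\underline\mu_\alpha(f)\le1$ gives ``$\le$'' in \eqref{eq:dual3}. So there are really only three things to prove: this estimate, ``$\ge$'' in \eqref{eq:dual3}, and ``$\ge$'' in \eqref{eq:dual1}.

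To establish the estimate I would put $F(r):=\text{ess-sup}_{0<s\le r}f(s)$, a nondecreasing function with $f\le F$ a.e., and $r_\lambda:=\text{ess-inf}\{s>0:f(s)>\lambda\}$, which also equals $\inf\{r>0:F(r)>\lambda\}$. Since $F$ is nondecreasing, $\{r:F(r)>\lambda\}$ and $(r_\lambda,\infty)$ differ by a null set, so one use of the layer-cake formula gives $\underline\mu_\alpha(f)=\int_0^\infty F(r)\,r^{-1-\alpha}\,dr=\alpha^{-1}\int_0^\infty r_\lambda^{-\alpha}\,d\lambda$. A second use, together with $f\le F$ a.e., $g\ge0$ and $\{F>\lambda\}\subseteq[r_\lambda,\infty)$ mod null sets, gives
\[
\int_0^\infty fg\,dr\le\int_0^\infty Fg\,dr=\int_0^\infty\Bigl(\int_{\{F>\lambda\}}g\,dr\Bigr)d\lambda\le\int_0^\infty\Bigl(\int_{r_\lambda}^\infty g\,dr\Bigr)d\lambda\le\overline\nu_\alpha(g)\int_0^\infty r_\lambda^{-\alpha}\,d\lambda ,
\]
which is the claimed bound; when either functional is infinite it is trivial.

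For the reverse inequality in \eqref{eq:dual3} I would exhibit an almost-extremal $f$: choose $r_*>0$ with $r_*^\alpha\int_{r_*}^\infty g\,ds\ge\overline\nu_\alpha(g)-\epsilon$ (or arbitrarily large if $\overline\nu_\alpha(g)=\infty$) and take $f:=\alpha r_*^\alpha\,\1_{(r_*,\infty)}$, for which $\underline\mu_\alpha(f)=1$ and $\int_0^\infty fg\,dr=\alpha r_*^\alpha\int_{r_*}^\infty g\,ds$, then let $\epsilon\to0$. For the reverse inequality in \eqref{eq:dual1} --- the substantial case --- I would use the representation $\text{ess-sup}_{0<s\le r}f(s)=\sup\{\int_0^r fh\,ds:\ h\ge0,\ \supp h\subseteq(0,r],\ \int_0^r h\,ds=1\}$. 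Fixing $\epsilon,\delta>0$, for $r\in[\delta,1/\delta]$ I would choose, jointly measurably in $(r,s)$, a density $h_r$ of this type with $\int_0^r fh_r\,ds\ge F(r)-\epsilon$ --- e.g.\ the normalized indicator of $\{s\le r:f(s)>c(r)\}$, where $c(r)$ is the largest multiple of $\epsilon$ essentially exceeded by $f$ on $(0,r]$ --- and $h_r:=r^{-1}\1_{(0,r]}$ otherwise. Setting $g(s):=\alpha\int_s^\infty h_r(s)\,r^{-1-\alpha}\,dr$, Tonelli's theorem gives $\int_0^\infty fg\,dr=\alpha\int_0^\infty r^{-1-\alpha}\bigl(\int_0^r fh_r\,ds\bigr)\,dr\ge\alpha\int_\delta^{1/\delta}(F(r)-\epsilon)\,r^{-1-\alpha}\,dr$ and, since $\int_\rho^\infty h_r(s)\,ds\le1$ and vanishes for $r\le\rho$, also $\int_\rho^\infty g\,ds=\alpha\int_\rho^\infty r^{-1-\alpha}\bigl(\int_\rho^r h_r\,ds\bigr)\,dr\le\rho^{-\alpha}$, so $\overline\nu_\alpha(g)\le1$; letting $\delta\to0$ (monotone convergence) and then $\epsilon\to0$ shows the supremum in \eqref{eq:dual1} is at least $\alpha\underline\mu_\alpha(f)$.

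The hard part will be this last construction: arranging that the densities $h_r$ can be selected jointly measurably in $(r,s)$ --- which the $\epsilon$-quantization of $F$ makes elementary and which thereby avoids any appeal to a measurable-selection theorem --- and handling the degenerate cases where $\underline\mu_\alpha(f)$ or $F$ is infinite or $F(0^+)>0$, which are absorbed by the same truncation to $r\in[\delta,1/\delta]$ and a monotone-convergence passage to the limit. Everything else is routine.
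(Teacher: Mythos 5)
Your plan differs genuinely from the paper's: the paper establishes $\le$ in \eqref{eq:dual1}, \eqref{eq:dual2} and $\ge$ in \eqref{eq:dual3}, \eqref{eq:dual4} by hand (its Steps 1 and 2, essentially your layer-cake estimate and your explicit test functions $f_s$), but for the hard direction $\ge$ in \eqref{eq:dual1} it invokes an abstract duality theorem for normed K\"othe spaces \cite[Theorem 71.1]{Za}, respectively \cite[Theorem 2.7]{BeSh}, with a discussion of which of the axioms for a Banach function norm are really needed. You replace this with an explicit construction of near-extremizing $g$'s, which is more elementary and self-contained (your $\epsilon$-quantization of $F$ to get measurable $h_r$ without a selection theorem is the right move). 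Your initial reduction of \eqref{eq:dual2}, \eqref{eq:dual4} to \eqref{eq:dual1}, \eqref{eq:dual3} via $r\mapsto r^{-1}$ is also not in the paper (the paper handles the two pairs by symmetric arguments in Steps 1b/2b/3b) and is a clean simplification.

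There is, however, one genuine slip in the limit passage of the hard step, and you have misdiagnosed which case is problematic. You write
\[
\int_0^\infty fg\,dr \;\ge\; \alpha\int_\delta^{1/\delta}\bigl(F(r)-\epsilon\bigr)\,r^{-1-\alpha}\,dr
\]
and then propose to send $\delta\to0$ by monotone convergence, followed by $\epsilon\to0$. But $\int_\delta^{1/\delta}(F(r)-\epsilon)\,r^{-1-\alpha}\,dr$ is \emph{not} monotone in $\delta$ and, in the generic case $F(0^+)=0$ (which is forced whenever $\underline\mu_\alpha(f)<\infty$), it tends to $-\infty$ as $\delta\to0$: indeed $F(r)-\epsilon<0$ for $r$ below some $a_\epsilon>0$, and $\int_0^{a_\epsilon} r^{-1-\alpha}\,dr=\infty$. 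So taken literally, the first limit destroys the bound. The case $F(0^+)>0$, which you list as degenerate and in need of special care, is in fact the one where your stated passage works (once $\epsilon<F(0^+)$, the integrand is nonnegative and monotone convergence applies). The fix is minor and two-fold: either reverse the order of limits --- for fixed $\delta$ send $\epsilon\to0$ (the integrand is bounded below by $-\epsilon_0 r^{-1-\alpha}$, which is integrable on the compact interval $[\delta,1/\delta]$, so the generalized monotone convergence theorem applies), then send $\delta\to0$ --- or, equivalently, note that since $\int_0^r fh_r\,ds\ge0$ one may replace $F(r)-\epsilon$ by $(F(r)-\epsilon)_+$ in the lower bound, after which the integrand is nonnegative and monotone in both parameters and the double limit goes through in either order. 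With this correction, the construction is complete and gives a proof that is more explicit than the paper's abstract appeal to K\"othe-space duality.
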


This result for $\beta=1$ is closely related to a result of Luxemburg and Zaanen \cite{LuZa}; see also \cite[Exercise 1.6]{BeSh}.

\begin{proof}
	\emph{Step 1a.}
	We set $\underline{f}(r):= \text{ess-sup}_{0<s\leq r} f(s)$ and note that $f\leq\underline f$ almost everywhere. (For a careful proof of this fact one can proceed as in \cite[Lemma 4.2]{LuZa}.) Therefore, we can bound
	$$
	\int_0^\infty f(r)g(r) \,dr \leq \int_0^\infty \underline f(r) g(r)\,dr \,.
	$$
	Since $\underline f$ is nondecreasing, for each $\tau>0$, the set $\{\underline f>\tau\}$ is an interval of the form $(a_\tau,\infty)$. We bound
	\begin{align*}
		\int_0^\infty \1_{\{\underline f >\tau\}}(r) g(r)\,dr & = \int_{a_\tau}^\infty g(r) \,dr \leq a_\tau^{-\alpha} \sup_{s>0} s^{\alpha} \int_s^\infty g(r)\,dr \\
		& = \alpha \int_0^\infty \1_{\{\underline f >\tau\}}(r)\,\frac{dr}{r^{1+\alpha}}\ \overline\nu_\alpha(g) \,.
	\end{align*}
	Integrating this inequality with respect to $\tau$, we arrive at
	$$
	\int_0^\infty \underline f(r) g(r)\,dr \leq \alpha \int_0^\infty \underline f(r)\,\frac{dr}{r^{1+\alpha}}\  \overline\nu_\alpha(g) = \alpha\, \underline\mu_\alpha(f)\, \overline\nu_\alpha(g) \,.
	$$
	Thus, we have shown that
	$$
	\int_0^\infty f(r)g(r) \,dr \leq \alpha\, \underline\mu_\alpha(f)\, \overline\nu_\alpha(g) \,.
	$$
	This proves $\leq$ in \eqref{eq:dual1} and \eqref{eq:dual3}.
	
	\medskip
	
	\emph{Step 1b.} 
	We set $\overline{f}(r):= \text{ess-sup}_{r\leq s<\infty} f(s)$ and bound
	$$
	\int_0^\infty f(r)g(r) \,dr \leq \int_0^\infty \overline f(r) g(r)\,dr \,.
	$$
	Since $\overline f$ is nonincreasing, for each $\tau>0$, the set $\{\overline f>\tau\}$ is an interval of the form $(0,b_\tau)$. We bound
	\begin{align*}
		\int_0^\infty \1_{\{\overline f >\tau\}}(r) g(r)\,dr & = \int_0^{b_\tau} g(r) \,dr \leq b_\tau^\beta \sup_{s>0} s^{-\beta} \int_0^s g(r)\,dr \\
		& = \beta \int_0^\infty \1_{\{\overline f >\tau\}}(r)\,\frac{dr}{r^{1-\beta}} \ \underline\nu_\beta(g) \,.
	\end{align*}
	Integrating this inequality with respect to $\tau$, we arrive at
	$$
	\int_0^\infty \overline f(r) g(r)\,dr \leq \beta \int_0^\infty \overline f(r)\, \frac{dr}{r^{1-\beta}}\  \underline\nu_\beta(g) = \beta\, \overline\mu_\beta(f)\, \underline\nu_\beta(g) \,.
	$$
	Thus, we have shown that
	$$
	\int_0^\infty f(r)g(r) \,dr \leq \beta\, \overline\mu_\beta(f)\, \underline\nu_\beta(g) \,.
	$$
	This proves $\leq$ in \eqref{eq:dual2} and \eqref{eq:dual4}.
	
	\medskip
	
	\emph{Step 2a.}
	For $s>0$, we set $f_s = \alpha s^{\alpha} \1_{(s,\infty)}$ and note that $\underline\mu_\alpha(f_s)=1$ and
	$$
	\sup_{s>0} \int_0^\infty f_s g\,dr = \alpha\, \overline\nu_\alpha(g) \,.
	$$
	Restricting the supremum in \eqref{eq:dual3} to $f_s$, $s>0$, we obtain $\geq$ in \eqref{eq:dual3}.
	
	\medskip
	
	\emph{Step 2b.}
	For $s>0$, we set $f_s = \beta s^{-\beta} \1_{(0,s)}$ and note that $\overline\mu_\beta(f_s)=1$ and
	$$
	\sup_{s>0} \int_0^\infty f_s g\,dr = \beta\, \underline\nu_\beta(g) \,.
	$$
	Restricting the supremum in \eqref{eq:dual4} to $f_s$, $s>0$, we obtain $\geq$ in \eqref{eq:dual4}.
	
	\medskip
	
	\emph{Step 3a.}
	Steps 1a and 2a show that we have equality in \eqref{eq:dual3} and $\leq$ in \eqref{eq:dual1}. Equality in \eqref{eq:dual1} now follows from a duality theorem \cite[Theorem 71.1]{Za}, because $\overline\mu_\alpha$ satisfies the Fatou property, that is,
	\begin{equation}
		\label{eq:fatou}
		0\leq f_n \uparrow f \ \text{a.e.}
		\qquad\implies\qquad
		\underline\mu_\alpha(f_n) \uparrow \underline\mu_\alpha(f) \,.
	\end{equation}
	Indeed, the assumption in \eqref{eq:fatou} implies that $\text{ess-sup}_{0<s\leq r} f_n(s)\to \text{ess-sup}_{0<s\leq r} f(s)$ for a.e. $r>0$, and then the conclusion in \eqref{eq:fatou} follows by monotone convergence.
	
	Since the reference \cite{Za} may not be easily accessible (and even less so the reference for \cite[Theorem 2.1]{LuZa}), it might be preferrable to appeal to \cite[Theorem 2.7]{BeSh}. There, in contrast to \cite{Za}, the Fatou property is included in the definition of a function norm; see property (P3) in \cite[Definition 1.1.1]{BeSh}. The latter definition also includes properties (P4) and (P5), which are not necessarily valid in our case (for instance, $\underline\mu_\alpha(\1_{(0,1)})=\infty$). This is not a problem, however, because if one follows the proof of \cite[Theorem 2.7]{BeSh} in our case, one sees that the latter two properties are not really needed. (Indeed, instead of only requiring the sets $R_N$ in that proof to have finite measure, one chooses them to be compact subsets of $(0,\infty)$. The remainder goes through without changes.)
		
	\medskip
	
	\emph{Step 3b.}
	The validity of \eqref{eq:dual2} and \eqref{eq:dual4} is deduced from Steps 1b and 2b in complete analogy with Step 3a.
\end{proof}

\begin{remark}
	We note that the assertion of Theorem \ref{dual} for one single value of either $\alpha$ or $\beta$ implies the assertion for all values of $\alpha$ and $\beta$. This follows by a change of variables, considering $\tilde f(\rho)=f(\rho^\gamma)$ and $\tilde g(\rho) = g(\rho^\gamma) \rho^{\gamma-1}$ for a suitably chosen $\gamma$.
\end{remark}

Let us now apply Theorem \ref{dual} to prove the equivalence between \eqref{eq:hardyintromain1} and \eqref{eq:hardyintromainweight1} and that between \eqref{eq:hardyintromain2} and \eqref{eq:hardyintromainweight2}.

To see that \eqref{eq:hardyintromain1} implies \eqref{eq:hardyintromainweight1}, we note that, by either \eqref{eq:dual1} or \eqref{eq:dual3},
$$
\int_0^\infty W|u|^p\,dr \leq (p-1)\, \underline\mu_{p-1}(|u|^p)\, \overline\nu_{p-1}(W)
$$
and bound $\underline\mu_{p-1}(|u|^p)$ using \eqref{eq:hardyintromain1} to arrive at \eqref{eq:hardyintromainweight1}. Conversely, by \eqref{eq:dual1},
$$
(p-1)\, \mu_{p-1}(|u|^p) = \sup\left\{ \int_0^\infty W|u|^p\,dr :\ \overline\nu_{p-1}(W)\leq 1 \right\}
$$
and we bound the right side using \eqref{eq:hardyintromainweight1} to arrive at \eqref{eq:hardyintromain1}.

The equivalence of \eqref{eq:hardyintromain2} and \eqref{eq:hardyintromainweight2} is similar, except that we use \eqref{eq:dual2} and \eqref{eq:dual4} with $f=r^{-p} |u|^p$, $g=r^p W$ and $\beta =1$.

We conclude this section by commenting on the \emph{open problem} mentioned in the introduction, namely that of finding a weighted inequality that is equivalent to \eqref{eq:hardyintromain}. In other words, given a nonnegative, measurable function $g$ on $(0,\infty)$, we would like to compute the quantity
$$
\sup\left\{ \int_0^\infty fg\,dr :\ f\geq 0 \,,\ \int_0^\infty \max\left\{ \sup_{0< s\leq r} \frac{f(s)}{r^p}, \sup_{r\leq s<\infty} \frac{f(s)}{s^p} \right\} dr \leq 1  \right\}.
$$
This would probably give rise to a weighted Hardy inequality that implies both \eqref{eq:hardyintromainweight1} and \eqref{eq:hardyintromainweight2}.


\section{The doubly-weighted Hardy inequality}\label{sec:cov}

In this section we discuss the inequalities in Theorems \ref{weightd} and \ref{weightdinv}. The material is well known, see, for instance, \cite{To,Ta,OpKu}, and is included here only to provide some context and to make this paper accessible to nonexperts.

First, we show that the inequalities \eqref{eq:hardyintromainweight1} and \eqref{eq:hardyintromainweight2} with a single weight function $W$ imply inequalities \eqref{eq:hardyintromainweightd1} and \eqref{eq:hardyintromainweightd2}. The converse is obvious, by taking $V\equiv 1$.

To shorten the statements, let us denote by $\mathcal F$ the set of locally absolutely continuous function $u$ on $(0,\infty)$ with $\liminf_{r\to 0}|u(r)|=0$, by $\mathcal W$ the set of all nonnegative, a.e.-finite, measurable functions on $(0,\infty)$ and by $\mathcal V_p$ the subset of $V\in\mathcal W$ such that
$$
\int_0^s V(t)^{-\frac{1}{p-1}}\,dt <\infty
\qquad\text{for all}\ s\in(0,\infty) \,.
$$

\begin{lemma}\label{cov}
	Let $1<p<\infty$.\\
	(1) Assume that there is a constant $c<\infty$ such that for all $W\in\mathcal W$ and $u\in\mathcal F$ one has
	$$
	\int_0^\infty W(r)|u(r)|^p\,dr \leq c \left( \sup_{s>0} s^{p-1} \int_s^\infty W(t)\,dt \right) \int_0^\infty |u'(r)|^p\,dr \,.
	$$
	Then for all $W\in\mathcal W$, $V\in\mathcal V_p$ and $u\in\mathcal F$ one has
	$$
	\int_0^\infty W(r)|u(r)|^p\,dr \leq c\, \overline B_{V,W}\, \int_0^\infty V(r) |u'(r)|^p\,dr \,.
	$$
	with $\overline B_{V,W} = \overline B$ from \eqref{eq:overb}.\\
	(2) Assume that there is a constant $c<\infty$ such that for all $W\in\mathcal W$ and $u\in\mathcal F$ one has
	$$
	\int_0^\infty W(r)|u(r)|^p\,dr \leq c \left( \sup_{s>0} s^{-1} \int_0^s W(t) t^p\,dt \right) \int_0^\infty |u'(r)|^p\,dr \,.
	$$
	Then for all $W\in\mathcal W$, $V\in\mathcal V_p$ and $u\in\mathcal F$ one has
	$$
	\int_0^\infty W(r)|u(r)|^p\,dr \leq c\, \underline B_{V,W}\, \int_0^\infty V(r) |u'(r)|^p\,dr \,.
	$$
	with $\underline B_{V,W} = \underline B$ from \eqref{eq:underb}.	
\end{lemma}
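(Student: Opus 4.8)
The plan is to use the classical change of variables that removes the weight $V$, reducing the doubly-weighted inequalities to the single-weight ones in the hypotheses. Fix $W\in\mathcal W$, $V\in\mathcal V_p$ and $u\in\mathcal F$; we may assume $\int_0^\infty V(r)|u'(r)|^p\,dr<\infty$, since otherwise both inequalities are trivial. Set
$$
\rho(r):=\int_0^r V(t)^{-\frac1{p-1}}\,dt\,,\qquad r\in(0,\infty)\,.
$$
Because $V\in\mathcal V_p$ this is finite for every $r$, and since $\int_0^s V^{-1/(p-1)}<\infty$ forces $V>0$ a.e., the map $\rho$ is a strictly increasing, locally absolutely continuous bijection of $(0,\infty)$ onto $(0,L)$, where $L:=\int_0^\infty V(t)^{-1/(p-1)}\,dt\in(0,\infty]$, with $\rho(0^+)=0$, $\rho(L^-)=\infty$ and $\rho'=V^{-1/(p-1)}$ a.e.; write $\rho^{-1}$ for the inverse. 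Put
$$
w:=u\circ\rho^{-1}\,,\qquad \widetilde W(\sigma):=W(\rho^{-1}(\sigma))\,V(\rho^{-1}(\sigma))^{\frac1{p-1}}\qquad\text{on }(0,L)\,,
$$
noting that $\widetilde W\in\mathcal W$ (a.e.-finiteness being immediate from the Luzin property of $\rho$).

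First I would verify that $w$ is locally absolutely continuous on $(0,L)$ with $w'(\sigma)=u'(\rho^{-1}(\sigma))V(\rho^{-1}(\sigma))^{1/(p-1)}$ for a.e.\ $\sigma$. This is cleanest directly from the substitution rule for absolutely continuous functions: for $0<r_1<r_2$ and $\sigma_i=\rho(r_i)$,
$$
u(r_2)-u(r_1)=\int_{r_1}^{r_2}u'(t)\,dt=\int_{r_1}^{r_2}\big(u'(t)V(t)^{\frac1{p-1}}\big)\rho'(t)\,dt=\int_{\sigma_1}^{\sigma_2}u'(\rho^{-1}(\sigma))V(\rho^{-1}(\sigma))^{\frac1{p-1}}\,d\sigma\,,
$$
and the integrand on the right lies in $L^p_{\mathrm{loc}}(0,L)$ because, by the same substitution, $\int_{\sigma_1}^{\sigma_2}|u'(\rho^{-1})|^pV(\rho^{-1})^{\frac p{p-1}}\,d\sigma=\int_{r_1}^{r_2}V(t)|u'(t)|^p\,dt<\infty$; this sidesteps any need for $\rho^{-1}$ to be absolutely continuous. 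The same change of variables, applied to the pointwise identities $W(r)|u(r)|^p=\widetilde W(\rho(r))|w(\rho(r))|^p\rho'(r)$ and $W(r)=\widetilde W(\rho(r))\rho'(r)$, yields
$$
\int_0^\infty V(r)|u'(r)|^p\,dr=\int_0^L|w'(\sigma)|^p\,d\sigma,\qquad \int_0^\infty W(r)|u(r)|^p\,dr=\int_0^L\widetilde W(\sigma)|w(\sigma)|^p\,d\sigma,
$$
$$
\int_\sigma^L\widetilde W(\tau)\,d\tau=\int_{\rho^{-1}(\sigma)}^\infty W(t)\,dt\qquad(0<\sigma<L)\,,
$$
and, since $\rho^{-1}(\sigma)\to0^+$ as $\sigma\to0^+$, also $\liminf_{\sigma\to0^+}|w(\sigma)|=\liminf_{r\to0^+}|u(r)|=0$.

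Next I would feed $w,\widetilde W$ into the single-weight hypothesis. If $L=\infty$, then $w\in\mathcal F$ and $\widetilde W\in\mathcal W$, so the hypothesis of part (1) applies verbatim. If $L<\infty$, then $p>1$ together with $w'\in L^p(0,L)$ gives, via Hölder's inequality, that $w'$ is integrable near $L$, so $w(L^-)$ exists; I then extend $w$ by the constant $w(L^-)$ on $[L,\infty)$ (the extension is again in $\mathcal F$) and $\widetilde W$ by $0$ on $[L,\infty)$, and the hypothesis applied to these extensions reproduces the same inequality, the range $\sigma\ge L$ contributing nothing to the relevant supremum because the extended weight vanishes there. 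In either case, substituting $s=\rho^{-1}(\sigma)$ (so that $\sigma=\int_0^s V(t)^{-1/(p-1)}\,dt$) in the displayed identity for $\int_\sigma^L\widetilde W$ gives
$$
\sup_{0<\sigma<L}\sigma^{p-1}\int_\sigma^L\widetilde W(\tau)\,d\tau=\sup_{s>0}\Big(\int_0^s V(t)^{-\frac1{p-1}}\,dt\Big)^{p-1}\int_s^\infty W(t)\,dt=\overline B\,,
$$
so the single-weight inequality becomes $\int_0^\infty W|u|^p\,dr\le c\,\overline B\int_0^\infty V|u'|^p\,dr$, which is (1). Part (2) is identical, except that one uses the pointwise identity $W(r)\rho(r)^p=\widetilde W(\rho(r))\rho(r)^p\,\rho'(r)$ to compute $\sup_{0<\sigma<L}\sigma^{-1}\int_0^\sigma\widetilde W(\tau)\tau^p\,d\tau=\underline B$ from \eqref{eq:underb}.

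The only genuinely delicate points I anticipate are (i) checking that $w$ lies in the relevant class of locally absolutely continuous functions, which the substitution identity above handles without any claim about $\rho^{-1}$, and (ii) the case $L<\infty$, where the single-weight inequality is formulated on all of $(0,\infty)$ but $w$ only naturally lives on $(0,L)$ — resolved by the constant extension, which is legitimate precisely because $p>1$ forces $w$ to have a finite limit at $L$. Everything else is routine bookkeeping with the change-of-variables formula for absolutely continuous functions, so I would present it briefly.
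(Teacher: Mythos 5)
Your proof is correct and takes essentially the same change-of-variables route as the paper's (the paper's $\phi,\psi$ are your $\rho,\rho^{-1}$), including the same treatment of the case $L<\infty$ by extending $w$ by a constant and $\widetilde W$ by zero. Two small remarks: strict monotonicity of $\rho$ comes from $\rho'=V^{-1/(p-1)}>0$ a.e., i.e.\ from $V<\infty$ a.e.\ (the a.e.-finiteness hypothesis), not from $V>0$ a.e.\ as you wrote; and your verification that $w$ is locally absolutely continuous directly via the substitution rule, instead of first establishing absolute continuity of $\rho^{-1}$ as the paper does, is a slight but legitimate streamlining of the same argument.
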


\begin{proof}
	Let $V\in\mathcal V_p$. Then the function $\phi$ on $(0,\infty)$, defined by
	$$
	\phi(r) := \int_0^r V(s)^{-\frac{1}{p-1}}\,ds
	\qquad\text{for all}\ r\in(0,\infty) \,,
	$$
	is strictly increasing (since $V$ is a.e.-finite) and locally absolutely continuous (by the integrability assumption on $V$). Its inverse function $\psi$ is defined on $(0,L)$ with $L:=\int_0^\infty V(s)^{-\frac{1}{p-1}}\,ds\in(0,\infty)\cup\{\infty\}$ and is strictly increasing. Since the set where $\phi' = V^{-\frac1{p-1}}$ vanishes has measure zero, $\psi$ is locally absolutely continuous in $(0,L)$ \cite[Exercise 3.46]{Le}. Using the chain rule \cite[Corollary 3.66]{Le} one deduces that
	$$
	\psi'(\rho) = V(\psi(\rho))^{\frac{1}{p-1}}
	\qquad\text{for a.e.}\ \rho\in(0,L) \,.
	$$
		
	If $u\in\mathcal F$, then, by \cite[Exercise 3.67]{Le}, $\tilde u = u\circ \psi$ is locally absolutely continuous in $(0,L)$ and, by the chain rule \cite[Corollary 3.66]{Le},
	$$
	\tilde u'(\rho) = u'(\psi(\rho))\,\psi'(\rho) = V(\psi(\rho))^{\frac{1}{p}} u'(\psi(\rho))  \psi'(\rho)^\frac{1}{p}
	\qquad\text{for a.e.}\ \rho\in(0,L) \,.
	$$
	Thus, by the change of variables formula \cite[Corollary 3.78]{Le},
	$$
	\int_0^L |\tilde u'(\rho)|^p\,d\rho = \int_0^L V(\psi(\rho)) |u'(\psi(\rho))|^p \psi'(\rho)\,d\rho = \int_0^\infty V(r)|u'(r)|^p\,dr \,.
	$$
	We assume that the right side is finite (for otherwise there is nothing to prove). If $L<\infty$, then the finiteness of the left side implies that $\tilde u$ extends continuously to the point $r=L$ \cite[Remark 2.7]{FrLaWe} and we can extend $\tilde u$ continuously by a constant to $[L,\infty)$.
	
	Now let $W\in\mathcal W$ and define a function $\tilde W$ on $(0,\infty)$ by
	$$
	\tilde W(\rho) := W(\psi(\rho)) \psi'(\rho)
	\qquad\text{for}\ \rho\in (0,L)
	$$
	and, if $L<\infty$, by $\tilde W(\rho):=0$ for $\rho\in[L,\infty)$. We note that, again by the change of variables formula \cite[Corollary 3.78]{Le},
	$$
	\int_0^\infty \tilde W(\rho) |\tilde u(\rho)|^p\,d\rho = \int_0^L W(\psi(\rho)) |u(\psi(\rho))|^p \psi'(\rho)\,d\rho = \int_0^\infty W(r) |u(r)|^p \,dr \,.
	$$
	
	The claimed doubly-weighted Hardy inequality is a consequence of the assumed single-weighted inequality for $\tilde u$ with weigth $\tilde W$. It remains to see how the constants transform. We have, for $\sigma<L$,
	$$
	\sigma^{p-1} \int_\sigma^\infty \tilde W(\tau)\,d\tau = \sigma^{p-1} \int_\sigma^L W(\psi(\tau))\psi'(\tau)\,d\tau = \left( \phi(\psi(\sigma)) \right)^{p-1} \int_{\psi(\sigma)}^\infty W(t)\,dt
	$$
	and, similarly,
	$$
	\sigma^{-1} \int_0^\sigma \tilde W(\tau) \tau^p\,d\tau = \sigma^{-1} \int_0^\sigma W(\psi(\tau)) \tau^p \psi'(\tau) \,d\tau = \left( \phi(\psi(\sigma)) \right)^{-1} \int_0^{\psi(\sigma)} W(t) \phi(t)^p \,dt \,.
	$$
	Recalling the definition of $\phi$, we arrive at the quantities $\overline B_{V,W}$ and $\underline B_{V,W}$. This completes the proof.
\end{proof}

Our next result shows that both conditions $\overline B<\infty$ and $\underline B<\infty$ are necessary for the doubly-weighted Hardy inequality to hold.

\begin{lemma}\label{converse}
	Let $1<p<\infty$ and let $W\in\mathcal W$ and $V\in\mathcal V_p$. Assume that there is a $C<\infty$ such that for any $u\in\mathcal F$,
	$$
	\int_0^\infty W(r)|u(r)|^p\,dr \leq C \int_0^\infty V(r) |u'(r)|^p\,dr \,.
	$$
	Then
	$$
	\overline B \leq C
	\qquad\text{and}\qquad
	\underline B \leq C \,.
	$$
\end{lemma}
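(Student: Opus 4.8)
The plan is to test the assumed inequality against a single one-parameter family of functions and read off both bounds directly; no duality or rearrangement is needed here.

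First I would introduce the function $\phi(r) := \int_0^r V(t)^{-\frac{1}{p-1}}\,dt$. Since $V\in\mathcal V_p$, $\phi$ is finite on $(0,\infty)$; since $V$ is a.e.-finite, $V^{-\frac1{p-1}}>0$ a.e., so $\phi$ is strictly positive and strictly increasing on $(0,\infty)$ with $\phi(0^+)=0$; and, exactly as in the proof of Lemma \ref{cov}, $\phi$ is locally absolutely continuous with $\phi'(r)=V(r)^{-\frac{1}{p-1}}$ for a.e.\ $r$. Then, for each fixed $s>0$, I would take as test function $u_s(r):=\phi(\min\{r,s\})$. This lies in $\mathcal F$: it is locally absolutely continuous (being $\phi$ composed with $r\mapsto\min\{r,s\}$) and $\lim_{r\to 0}u_s(r)=\phi(0^+)=0$. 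Its derivative is $u_s'(r)=V(r)^{-\frac{1}{p-1}}$ for $r<s$ and $u_s'(r)=0$ for $r>s$, so
$$
\int_0^\infty V(r)|u_s'(r)|^p\,dr = \int_0^s V(r)\,V(r)^{-\frac{p}{p-1}}\,dr = \int_0^s V(r)^{-\frac{1}{p-1}}\,dr = \phi(s) < \infty \,.
$$
Applying the hypothesis of the lemma to $u_s$ therefore gives $\int_0^\infty W(r)|u_s(r)|^p\,dr \le C\,\phi(s)$.

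From this single estimate both conclusions follow by keeping only part of the left-hand integral. For $\overline B\le C$, I would discard the contribution over $(0,s)$ and use that $u_s\equiv\phi(s)$ on $[s,\infty)$:
$$
\phi(s)^p \int_s^\infty W(r)\,dr = \int_s^\infty W(r)|u_s(r)|^p\,dr \le \int_0^\infty W(r)|u_s(r)|^p\,dr \le C\,\phi(s) \,,
$$
and dividing by $\phi(s)>0$ yields $\phi(s)^{p-1}\int_s^\infty W(r)\,dr\le C$; taking the supremum over $s>0$ gives $\overline B\le C$. For $\underline B\le C$, I would instead discard the contribution over $(s,\infty)$ and use $u_s(r)=\phi(r)$ for $r\le s$:
$$
\int_0^s W(r)\,\phi(r)^p\,dr = \int_0^s W(r)|u_s(r)|^p\,dr \le C\,\phi(s) \,,
$$
so $\phi(s)^{-1}\int_0^s W(r)\phi(r)^p\,dr\le C$; taking the supremum over $s>0$ gives $\underline B\le C$.

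I do not expect a serious obstacle: the only points requiring care are that $\phi(s)$ is finite (guaranteed by $V\in\mathcal V_p$) and strictly positive (guaranteed by $V$ being a.e.-finite), so that the divisions above are legitimate and $u_s\in\mathcal F$, and the routine verification that $V(r)|u_s'(r)|^p=V(r)^{-\frac1{p-1}}$ a.e.\ on $(0,s)$ together with the local absolute continuity of $u_s$ — all of which is of the same nature as what is already done in the proof of Lemma \ref{cov}.
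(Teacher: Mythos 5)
Your proof is correct and is essentially the paper's own proof: the test function $u_s(r)=\phi(\min\{r,s\})$ is exactly $\int_0^r \1_{(0,s)}(t)V(t)^{-1/(p-1)}\,dt$, which the paper plugs into the assumed inequality, divides by $\phi(s)$, and then discards one of the two resulting terms before taking the supremum over $s$.
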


\begin{proof}
	For fixed $s>0$, we evaluate the assumed Hardy inequality for the function
	$$
	u(r) = \int_0^r \1_{(0,s)}(t) V(t)^{-\frac{1}{p-1}}\,dt
	$$
	and obtain
	\begin{align*}
		& \left( \int_0^s V(t)^{-\frac{1}{p-1}}dt \right)^{-1} \int_0^s W(t) \left( \int_0^t V(t')^{-\frac{1}{p-1}}dt' \right)^p dt \\
		& + \left( \int_0^s V(t)^{-\frac{1}{p-1}}\,dt \right)^{p-1} \int_s^\infty W(t)\,dt \leq C
		\qquad\text{for all}\ s\in(0,\infty) \,.
	\end{align*}
	Dropping one of the two terms on the left side and taking the supremum over $s\in(0,\infty)$, we obtain the claimed inequalities.
\end{proof}

\begin{remark}
	Note that Lemma \ref{converse}, together with Theorem \ref{weightd}, implies that
	$$
	\overline B \leq \left( \frac{p}{p-1} \right)^p \, \underline B
	\qquad\text{and}\qquad
	\underline B \leq \frac{p^p}{(p-1)^{p-1}}\, \overline B \,.
	$$
	We do \emph{not} claim that the constants here are sharp.
\end{remark}


\section{Proof of Theorem \ref{weightd} for $p=2$}\label{sec:direct}

In this section we provide a proof of Theorem \ref{weightd} in the special case $p=2$ that does not rely on Theorem \ref{mainintro}. Just like the proof of Theorem \ref{mainintro}, however, it does rely on \eqref{eq:hardyintro}.

Due to the equivalence discussed in the previous section, it suffices to prove \eqref{eq:hardyintromainweight1} and \eqref{eq:hardyintromainweight2} for $p=2$. As we mentioned in the introduction, the proof of \eqref{eq:hardyintromainweight1} for $p=2$ is due to Kac and Kre\u{\i}n \cite{KaKr}; see also \cite[Proof of Theorem 2.4]{Bi} by Birman and Pavlov. Our proof of \eqref{eq:hardyintromainweight2} for $p=2$ is a small variation of theirs, which, since we have not seen it anywhere, might be worth recording. We find it instructive to present both proofs, so that the similarities and differences become clearer.

\begin{proof}[Proof of \eqref{eq:hardyintromainweight1} for $p=2$]
	Let $u$ be a locally absolutely continuous function on $(0,\infty)$ with $\liminf_{r\to 0} |u(r)|=0$. Let
	$$
	\Omega(r) := \int_r^\infty W(t)\,dt \,,
	\qquad
	C:= \sup_{r>0} r \Omega(r) \,.
	$$
	We may also assume that $u'\in L^2(0,\infty)$ and $C<\infty$, for otherwise there is nothing to prove. For any $0<\epsilon<M<\infty$ we have
	\begin{align}\label{eq:directproof1}
		\int_\epsilon^M W(r)|u(r)|^2\,dr & = - \int_\epsilon^M \Omega'(r)|u(r)|^2\,dr \notag \\
		& = 2 \re \int_\epsilon^M \Omega(r) \overline{u(r)} u'(r)\,dr - \Omega(M) |u(M)|^2 + \Omega(\epsilon) |u(\epsilon)|^2 \notag \\
		& \leq 2 \re \int_\epsilon^M \Omega(r) \overline{u(r)} u'(r)\,dr + \Omega(\epsilon) |u(\epsilon)|^2 \notag \\
		& \leq C \left( 2 \int_\epsilon^M \frac{|u(r)|}{r} |u'(r)|\,dr + \epsilon^{-1} |u(\epsilon)|^2 \right). 
	\end{align}
	Using \eqref{eq:hardyintro}, we can bound
	\begin{align*}
	2 \int_\epsilon^M \frac{|u(r)|}{r} |u'(r)|\,dr
	& \leq 2 \left( \int_0^\infty \frac{|u(r)|^2}{r^2}\,dr \right)^{1/2} \left( \int_0^\infty |u'(r)|^2\,dr \right)^{1/2} \\
	& \leq 4 \int_0^\infty |u'(r)|^2\,dr \,. 
	\end{align*}
	Inserting this into \eqref{eq:directproof1}, we see that it remains to control $\epsilon^{-1} |u(\epsilon)|^2$. For $0<\rho<\epsilon$, we have
	$$
	|u(\epsilon)| \leq |u(\rho)| + \int_\rho^\epsilon |u'(r)|\,dr \leq |u(\rho)| + \epsilon^{1/2} \left( \int_0^\epsilon |u'(r)|^2\,dr \right)^{1/2} \,. 
	$$
	Choosing a sequence of $\rho$'s along which $u$ tends to zero, which exists by assumption, we deduce that
	$$
	\epsilon^{-1} |u(\epsilon)|^2 \leq \int_0^\epsilon |u'(r)|^2\,dr \,.
	$$
	By dominated convergence, this tends to zero as $\epsilon\to 0$, which concludes the proof.	
\end{proof}

\begin{proof}[Proof of \eqref{eq:hardyintromainweight2} for $p=2$]
	Let $u$ be a locally absolutely continuous function on $(0,\infty)$ with $\liminf_{r\to 0} |u(r)|=0$. Let
	$$
	\Omega(r) := \int_0^r W(t) t^2 \,dt \,,
	\qquad
	C:= \sup_{r>0} r^{-1} \Omega(r) \,.
	$$
	We may also assume that $u'\in L^2(0,\infty)$ and $C<\infty$, for otherwise there is nothing to prove. For any $0<\epsilon<M<\infty$ we have
	\begin{align}\label{eq:directproof2}
		\int_\epsilon^M W(r)|u(r)|^2\,dr & = \int_\epsilon^M \Omega'(r)r^{-2}|u(r)|^2\,dr \notag \\
		& = - 2 \re \int_\epsilon^M \Omega(r) r^{-2} \overline{u(r)} \left( u'(r) - r^{-1} u(r) \right)dr \notag \\
		& \quad + \Omega(M) M^{-2} |u(M)|^2 - \Omega(\epsilon) \epsilon^{-2} |u(\epsilon)|^2 \notag \\
		& \leq - 2 \re \int_\epsilon^M \Omega(r) r^{-2} \overline{u(r)} \left( u'(r) - r^{-1} u(r) \right)dr \notag \\
		& \quad + \Omega(M) M^{-2} |u(M)|^2 \notag \\
		& \leq C \left( 2 \int_\epsilon^M \frac{|u(r)|}r \left| u'(r) - r^{-1} u(r) \right| dr + M^{-1} |u(M)|^2\right). 
	\end{align}
	Using \eqref{eq:hardyintro}, we can bound
	\begin{align*}
		2 \! \int_\epsilon^M \! \frac{|u(r)|}r \left| u'(r) - r^{-1} u(r) \right| dr
		& \leq 2 \left( \int_0^\infty \! \frac{|u(r)|^2}{r^2}\,dr \right)^{\!1/2} \! \left( \int_\epsilon^M \! |u'(r)-r^{-1} u(r)|^2\,dr \right)^{\!1/2} \\
		& \leq 4 \left( \int_0^\infty \! |u'(r)|^2\,dr \right)^{\!1/2} \! \left( \int_\epsilon^M \! |u'(r)-r^{-1} u(r)|^2\,dr \right)^{\!1/2}\!\!.
	\end{align*}
	At this point we slightly deviate from the previous proof and we notice that, since
	$$
	|u'(r)-r^{-1} u(r)|^2 = |u'(r)|^2 - r^{-1} (|u|^2)'(r) + r^{-2} |u(r)|^2 \,,
	$$
	we have, integrating by parts,
	\begin{align*}
		\int_\epsilon^M |u'(r)-r^{-1} u(r)|^2\,dr & = \int_\epsilon^M |u'(r)|^2 \,dr - M^{-1} |u(M)|^2 + \epsilon^{-1} |u(\epsilon)|^2\\
		& \leq \int_\epsilon^M |u'(r)|^2 \,dr + \epsilon^{-1} |u(\epsilon)|^2 \,.
	\end{align*}
	The term $\epsilon^{-1} |u(\epsilon)|^2$ can be dealt with in the same way as in the previous proof.
	
	Inserting all this into \eqref{eq:directproof2}, we see that it remains to control $M^{-1} |u(M)|^2$. Since, by \eqref{eq:hardyintro}, $r^{-1}|u(r)|^2$ is integrable with respect to the measure $r^{-1}\,dr$, which has infinite integral near infinity, we must have $\liminf_{r\to\infty} r^{-1} |u(r)|^2 = 0$. Taking a sequence of $M$'s along which $r^{-1}|u(r)|^2$ tends to zero, we deduce the claimed inequality.
\end{proof}


\section{Hardy inequalities on subintervals of the halfline}\label{sec:intervals}

In this section we record doubly weighted Hardy inequalities on intervals of the form $(0,R)$ or $(R,\infty)$. They are rather straightforward consequences of Theorems \ref{weightd} and \ref{weightdinv}. We state them here explicitly because it is those inequalities that will play a role in our application to Schr\"odinger operators in the next section.

\begin{corollary}\label{weightdint1}
	Let $1<p<\infty$, let $R\in(0,\infty)$ and let $V,W$ be nonnegative, a.e.-finite, measurable functions on $(0,R)$ such that
	$$
	\int_0^s V(t)^{-\frac{1}{p-1}}\,dt <\infty 
	\qquad\text{for all}\ s\in(0,R) \,.
	$$
	Then, for any locally absolutely continuous function $u$ on $(0,R)$ with $\liminf_{r\to 0} |u(r)|\!=0$,
	\begin{equation}
		\label{eq:hardyintromainweightd1int1}
		\int_0^R W(r) |u(r)|^p \,dr \leq \frac{p^p}{(p-1)^{p-1}} 
		\overline B_R \int_0^R V(r) |u'(r)|^p\,dr
	\end{equation}
	and
	\begin{equation}
		\label{eq:hardyintromainweightd2int1}
		\int_0^R W(r) |u(r)|^p \,dr \leq \left(\frac{p}{p-1} \right)^p \underline B_R \int_0^R V(r) |u'(r)|^p\,dr
	\end{equation}
	with
	\begin{equation*}
		\overline B_R := \sup_{0<s<R} \left( \int_0^s V(t)^{-\frac1{p-1}}\,dt \right)^{p-1} \left( \int_s^R W(t)\,dt \right)
	\end{equation*}
	and
	\begin{equation*}
		\underline B_R := \sup_{0<s<R} \left( \int_0^s V(t)^{-\frac{1}{p-1}}dt \right)^{-1} \int_0^s W(t) \left( \int_0^t V(t')^{-\frac{1}{p-1}}dt' \right)^p dt \,.
	\end{equation*}
\end{corollary}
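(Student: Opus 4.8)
The plan is to deduce Corollary~\ref{weightdint1} from Theorem~\ref{weightd} by extending the weights and the function from $(0,R)$ to $(0,\infty)$ and then exhausting $(0,R)$ from the inside. The one genuine subtlety is that the hypothesis on $V$ is imposed for $s\in(0,R)$ but not at $s=R$, so that in general $V$ has no extension lying in $\mathcal V_p$ on all of $(0,\infty)$; this is dealt with by first working on subintervals $(0,R')$ with $R'<R$.

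First I would fix $R'\in(0,R)$, so that $\int_0^{R'}V(t)^{-1/(p-1)}\,dt<\infty$, and extend $W,V$ to functions $W_{\mathrm{ext}},V_{\mathrm{ext}}$ on $(0,\infty)$ by putting $W_{\mathrm{ext}}=0$ and $V_{\mathrm{ext}}=1$ on $[R',\infty)$; then $W_{\mathrm{ext}}\in\mathcal W$ and $V_{\mathrm{ext}}\in\mathcal V_p$ (the choice of $V_{\mathrm{ext}}$ on $[R',\infty)$ being immaterial as long as $\int_0^s V_{\mathrm{ext}}^{-1/(p-1)}\,dt<\infty$ for every $s>0$). Given $u$ as in the statement, we may assume $\int_0^R V|u'|^p\,dr<\infty$, for otherwise there is nothing to prove: since the integrability of $V^{-1/(p-1)}$ forces $V>0$ a.e., if that integral is infinite then either $W=0$ a.e.\ or the right-hand sides of \eqref{eq:hardyintromainweightd1int1} and \eqref{eq:hardyintromainweightd2int1} are infinite. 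Now H\"older's inequality gives $u'\in L^1(0,R')$, and together with $\liminf_{r\to0}|u(r)|=0$ this shows $u(r)=\int_0^r u'(t)\,dt$ for $r\in(0,R')$; hence $u$ extends to an absolutely continuous function on $[0,R']$, and I would let $u_{\mathrm{ext}}$ be this extension on $[0,R']$ and the constant $u(R')$ on $[R',\infty)$, so that $u_{\mathrm{ext}}\in\mathcal F$.

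Next I would apply Theorem~\ref{weightd} to $u_{\mathrm{ext}}$ with the weights $V_{\mathrm{ext}},W_{\mathrm{ext}}$. Since $W_{\mathrm{ext}}$ vanishes on $[R',\infty)$ and $u_{\mathrm{ext}}'$ vanishes there too, the two integrals in \eqref{eq:hardyintromainweightd1} and \eqref{eq:hardyintromainweightd2} become $\int_0^{R'}W|u|^p\,dr$ and $\int_0^{R'}V|u'|^p\,dr$. A short computation then shows that the constants \eqref{eq:overb} and \eqref{eq:underb} for the extended weights equal, respectively, the quantities $\overline B_R$ and $\underline B_R$ of the corollary with $R$ replaced by $R'$ (call them $\overline B_{R'}$ and $\underline B_{R'}$): for $\overline B$ this is immediate since $\int_s^\infty W_{\mathrm{ext}}=0$ when $s\ge R'$, and for $\underline B$ one notes that the integrand in \eqref{eq:underb} vanishes beyond $R'$, so the value of the defining expression at any $s\ge R'$ is bounded, via $\int_0^s V_{\mathrm{ext}}^{-1/(p-1)}\ge\int_0^{R'}V^{-1/(p-1)}$ and monotone convergence, by its supremum over $0<s<R'$. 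Hence Theorem~\ref{weightd} yields \eqref{eq:hardyintromainweightd1int1} and \eqref{eq:hardyintromainweightd2int1} with $R'$ in place of $R$.

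Finally I would let $R'\uparrow R$. Replacing $R'$ by $R$ only enlarges the ranges of the suprema defining $\overline B_{R'}$, $\underline B_{R'}$ and increases $\int_s^{R'}W$, so $\overline B_{R'}\le\overline B_R$ and $\underline B_{R'}\le\underline B_R$; moreover $\int_0^{R'}V|u'|^p\le\int_0^R V|u'|^p$. Thus for every $R'<R$ we have $\int_0^{R'}W|u|^p\le\frac{p^p}{(p-1)^{p-1}}\overline B_R\int_0^R V|u'|^p$ and $\int_0^{R'}W|u|^p\le(\frac{p}{p-1})^p\underline B_R\int_0^R V|u'|^p$, and letting $R'\uparrow R$ with monotone convergence on the left-hand sides gives \eqref{eq:hardyintromainweightd1int1} and \eqref{eq:hardyintromainweightd2int1}. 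The step demanding the most care is the identification of $\overline B$ and $\underline B$ under the extension, in particular the verification that permitting $s\ge R'$ does not enlarge the supremum in \eqref{eq:underb}; the rest is routine bookkeeping, so this is the (modest) main obstacle.
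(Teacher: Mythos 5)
Your proof is correct, and it takes a slightly different route from the paper's. The paper splits into two cases according to whether $\int_0^R V(t)^{-1/(p-1)}\,dt$ is finite: if it is infinite, the change of variables $\phi(r)=\int_0^r V^{-1/(p-1)}\,dt$ already maps $(0,R)$ onto $(0,\infty)$ and the corollary follows directly from the single-weight inequality as in the proof of Lemma~\ref{cov}, with no extension needed; if it is finite, the paper extends $W$ by zero, $V$ by anything in $\mathcal V_p$, and $u$ by the constant $u(R)$ on $(R,\infty)$ and then applies Theorem~\ref{weightd}. You instead avoid this dichotomy by exhausting $(0,R)$ from the inside: you apply the extension argument on $(0,R')$ for $R'<R$, where the relevant integral of $V^{-1/(p-1)}$ is automatically finite, and then pass to the limit $R'\uparrow R$ using monotone convergence and the monotonicity of $\overline B_{R'},\underline B_{R'}$ in $R'$. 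This buys you a single uniform argument that never needs to invoke the degenerate case, at the modest cost of the final limiting step; the paper's approach is a touch more direct when $\int_0^R V^{-1/(p-1)}\,dt<\infty$ but then has to dispose of the other case separately. Your verification that the extended $\underline B$ equals $\underline B_{R'}$ (rather than merely being bounded by it, which is all you need), and your observation that if $\int_0^R V|u'|^p=\infty$ then either $W\equiv 0$ or the right-hand sides are $+\infty$, are both correct.
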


\begin{proof}
	If $\int_0^R V(t)^{-\frac{1}{p-1}}\,dt =\infty$, the corollary follows as in the proof of Lemma \ref{cov} from \eqref{eq:hardyintromain1}. Thus, assume that the integral is finite. We extend the functions $W,V,u$ to $(R,\infty)$ as follows. We extend $W$ by zero, $V$ in an arbitrary manner such that the integral condition in Theorem \ref{weightd} is satisfied and $u$ by the constant $u(R)$. (Note that $\lim_{r\to R}u(r)$ exists if $\int_0^R V(r)|u'(r)|^p\,dr<\infty$. This is standard for $V\equiv 1$ and follows for general $V$ as in the corollary by the argument in the proof of Lemma \ref{cov}.) The corollary now follows from Theorem \ref{weightd}, noting that the suprema $\overline B$ and $\underline B$ for the extended functions can be restricted to $s<R$.
\end{proof}

\begin{corollary}\label{weightdint2}
	Let $1<p<\infty$, let $R\in(0,\infty)$ and let $V,W$ be nonnegative, a.e.-finite, measurable functions on $(R,\infty)$ such that
	$$
	\int_R^s V(t)^{-\frac{1}{p-1}}\,dt <\infty
	\qquad\text{for all}\ s\in(R,\infty) \,.
	$$
	Then, for any locally absolutely continuous function $u$ on $(R,\infty)$ and any $M\in(0,\infty)$,
	\begin{equation}
		\label{eq:hardyintromainweightd1int2}
		\int_R^\infty W(r) |u(r)|^p \,dr \leq \frac{p^p}{(p-1)^{p-1}} 
		\overline B^R(M) \left( \int_R^\infty V(r) |u'(r)|^p\,dr + M^{-p+1} |u(R)|^p \right)
	\end{equation}
	and
	\begin{equation}
		\label{eq:hardyintromainweightd2int2}
		\int_R^\infty W(r) |u(r)|^p \,dr \leq \left(\frac{p}{p-1} \right)^p \underline B^R(M) \left( \int_R^\infty V(r) |u'(r)|^p\,dr + M^{-p+1} |u(R)|^p \right)
	\end{equation}
	with
	\begin{equation*}
		\overline B^R(M) := \sup_{s>R} \left( M+ \int_R^s V(t)^{-\frac1{p-1}}\,dt \right)^{p-1} \left( \int_s^\infty W(t)\,dt \right)
	\end{equation*}
	and
	\begin{equation*}
		\underline B^R(M) := \sup_{s>R} \left( M+ \int_0^s V(t)^{-\frac{1}{p-1}}dt \right)^{-1} \int_0^s W(t) \left( M+ \int_R^t V(t')^{-\frac{1}{p-1}}dt' \right)^p dt \,.
	\end{equation*}
\end{corollary}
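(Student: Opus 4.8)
The plan is to mimic the proof of Corollary \ref{weightdint1}: reduce Corollary \ref{weightdint2} to Theorem \ref{weightd} by extending $V$, $W$, and $u$ from $(R,\infty)$ to all of $(0,\infty)$, with the extension chosen so that the extra energy it costs is exactly the boundary term $M^{-p+1}|u(R)|^p$ on the right-hand sides of \eqref{eq:hardyintromainweightd1int2} and \eqref{eq:hardyintromainweightd2int2}. First I would dispose of the trivial cases: if $\overline B^R(M)=\infty$ (respectively $\underline B^R(M)=\infty$) or if $\int_R^\infty V|u'|^p\,dr=\infty$, the asserted inequality holds trivially, so I may assume these quantities finite; and as in the proof of Lemma \ref{cov} (already invoked in Corollary \ref{weightdint1}), finiteness of $\int_R^\infty V|u'|^p\,dr$ forces $\lim_{r\to R^+}u(r)$ to exist, a value I denote by $u(R)$.

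Next comes the extension. I would set $W\equiv 0$ on $(0,R)$; set $V\equiv (R/M)^{p-1}$ on $(0,R)$, the constant being chosen precisely so that $\int_0^R V(t)^{-1/(p-1)}\,dt=M$ (this keeps the extended $V$ in $\mathcal V_p$, using the hypothesis that $\int_R^s V(t)^{-1/(p-1)}\,dt<\infty$ for all $s$); and extend $u$ to $(0,R)$ by the linear function $u(r):=(r/R)\,u(R)$. Then the extended $u$ is locally absolutely continuous on $(0,\infty)$ with $u(r)\to 0$ as $r\to0$, so $u\in\mathcal F$. A direct computation gives
$$
\int_0^R V(r)|u'(r)|^p\,dr = (R/M)^{p-1}\,\frac{|u(R)|^p}{R^p}\,R = M^{-p+1}|u(R)|^p \,,
$$
hence $\int_0^\infty V|u'|^p\,dr = \int_R^\infty V|u'|^p\,dr + M^{-p+1}|u(R)|^p$, while $\int_0^\infty W|u|^p\,dr = \int_R^\infty W|u|^p\,dr$ since $W$ vanishes on $(0,R)$. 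Feeding the extended data into Theorem \ref{weightd} then yields \eqref{eq:hardyintromainweightd1int2} and \eqref{eq:hardyintromainweightd2int2}, provided the constants are identified correctly.

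The remaining step is to verify that the quantities $\overline B$ and $\underline B$ of \eqref{eq:overb} and \eqref{eq:underb} formed from the extended $V,W$ coincide with $\overline B^R(M)$ and $\underline B^R(M)$. For $\overline B$ one uses that $\int_s^\infty W=\int_R^\infty W$ for $s\le R$ while $\left(\int_0^s V(t)^{-1/(p-1)}\,dt\right)^{p-1}=(sM/R)^{p-1}$ is nondecreasing in $s$, so the part of the supremum over $s\le R$ equals $M^{p-1}\int_R^\infty W$, which is precisely the $s\downarrow R$ limit of the expression for $s>R$; and for $s>R$ one substitutes $\int_0^s V(t)^{-1/(p-1)}\,dt=M+\int_R^s V(t)^{-1/(p-1)}\,dt$. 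For $\underline B$ the outer integral $\int_0^s$ vanishes when $s\le R$ (again because $W\equiv 0$ there), so only $s>R$ contributes and the same substitution recovers $\underline B^R(M)$. I expect the only substantive point here to be the choice of extension in the second step: recognizing that, once $V$ is normalized so that $\int_0^R V(t)^{-1/(p-1)}\,dt=M$, the minimal $V$-weighted energy $\int_0^R V|u'|^p\,dr$ among functions with boundary values $0$ and $u(R)$ equals $M^{-p+1}|u(R)|^p$, and (for a constant weight) is attained by the linear interpolant. Everything else is routine bookkeeping.
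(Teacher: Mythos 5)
Your proof is correct and takes essentially the same route as the paper: extend $W$ by zero on $(0,R)$, extend $V$ so that $\int_0^R V(t)^{-1/(p-1)}\,dt = M$, extend $u$ to vanish at the origin with $\int_0^R V|u'|^p\,dr = M^{-p+1}|u(R)|^p$, and apply Theorem \ref{weightd}. The only (immaterial) difference is that the paper extends $V$ in an arbitrary manner subject to the constraint and then sets $u(r) = u(R)\,M^{-1}\int_0^r V(t)^{-1/(p-1)}\,dt$ on $(0,R)$, whereas you specialize to constant $V=(R/M)^{p-1}$ and the linear interpolant; your computation and the identification of the suprema $\overline B$, $\underline B$ with $\overline B^R(M)$, $\underline B^R(M)$ are carried out correctly.
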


\begin{proof}
	We extend the functions $W,V,u$ to $(0,R)$ as follows. We extend $W$ by zero, $V$ in an arbitrary manner such that
	$$
	\int_0^R V(t)^{-\frac{1}{p-1}}\,dt = M
	$$
	and $u$ by setting
	$$
	u(r) := u(R) M^{-1} \int_0^r V(t)^{-\frac{1}{p-1}}\,dt
	\qquad\text{for all}\ r\in(0,R) \,.
	$$
	For the existence of $u(R)$ see the preceding proof. The corollary now follows from Theorem \ref{weightd}, noting that the suprema $\overline B$ and $\underline B$ for the extended functions can be restricted to $s>R$.	
\end{proof}

The following two corollaries follow from Theorem \ref{weightdinv} in the same way as the previous two corollaries follow from Theorem \ref{weightd}. Alternatively, they follow from the previous two corollaries by the same change of variables $r\mapsto r^{-1}$ that was used to deduce Theorem \ref{weightdinv} from Theorem \ref{weightd}.

\begin{corollary}\label{weightdinvint1}
	Let $1<p<\infty$, let $R\in(0,\infty)$ and let $V,W$ be nonnegative, a.e.-finite, measurable functions on $(0,R)$ such that
	$$
	\int_s^R V(t)^{-\frac{1}{p-1}}dt <\infty
	\qquad\text{for all}\ s\in (0,R) \,.
	$$
	Then for any locally absolutely continuous function $u$ on $(0,R)$ and any $M\in(0,\infty)$,
	\begin{equation}
		\label{eq:hardyintromainweightdinv1int1}
		\int_0^R W(r) |u(r)|^p \,dr \leq \frac{p^p}{(p-1)^{p-1}} 
		\overline B'_R(M) \left( \int_0^R V(r) |u'(r)|^p\,dr + M^{-p+1} |u(R)|^p \right)
	\end{equation}
	and
	\begin{equation}
		\label{eq:hardyintromainweightdinv2int1}
		\int_0^R W(r) |u(r)|^p \,dr \leq \left(\frac{p}{p-1} \right)^p \underline B'_R(M) \left( \int_0^R V(r) |u'(r)|^p\,dr + M^{-p+1} |u(R)|^p \right)
	\end{equation}
	with
	\begin{equation*}
		\overline B'_R(M) := \sup_{0<s<R} \left( M + \int_s^R V(t)^{-\frac1{p-1}}\,dt \right)^{p-1} \left( \int_0^s W(t)\,dt \right)
	\end{equation*}
	and
	\begin{equation*}
		\underline B'_R(M) := \sup_{0<s<R} \left( M+ \int_s^R V(t)^{-\frac{1}{p-1}}dt \right)^{-1} \int_s^\infty W(t) \left( M+ \int_t^R V(t')^{-\frac{1}{p-1}}dt' \right)^p dt \,.
	\end{equation*}
\end{corollary}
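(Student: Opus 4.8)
The plan is to follow the proof of Corollary~\ref{weightdint2} with the roles of the two endpoints $0$ and $\infty$ interchanged, using Theorem~\ref{weightdinv} in place of Theorem~\ref{weightd}. As always we may assume $\int_0^R V(r)|u'(r)|^p\,dr<\infty$, since otherwise both right-hand sides are infinite and there is nothing to prove; under this assumption the limit $u(R):=\lim_{r\to R^-}u(r)$ exists and is finite, by the substitution argument from the proof of Lemma~\ref{cov} applied near $r=R$ (note that $\int_s^R V(t)^{-\frac1{p-1}}\,dt<\infty$ for $s$ close to $R$, so that substitution is available there), exactly as in the proofs of Corollaries~\ref{weightdint1} and~\ref{weightdint2}.

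First I would extend $W,V,u$ from $(0,R)$ to $(R,\infty)$: extend $W$ by zero; extend $V$ in any way with $\int_R^\infty V(t)^{-\frac1{p-1}}\,dt=M$ (such an extension exists, e.g.\ a suitable positive constant times $t^p$, and then automatically $\int_s^\infty V(t)^{-\frac1{p-1}}\,dt<\infty$ for every $s>0$, as Theorem~\ref{weightdinv} requires); and set
\[
u(r):=u(R)\,M^{-1}\int_r^\infty V(t)^{-\frac1{p-1}}\,dt
\qquad\text{for } r>R .
\]
This makes $u$ locally absolutely continuous on $(0,\infty)$, continuous at $r=R$, and tending to $0$ as $r\to\infty$, so in particular $\liminf_{r\to\infty}|u(r)|=0$. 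A direct computation gives $u'(r)=-u(R)M^{-1}V(r)^{-\frac1{p-1}}$ for $r>R$, hence
\[
\int_R^\infty V(r)|u'(r)|^p\,dr
=|u(R)|^pM^{-p}\int_R^\infty V(t)^{-\frac1{p-1}}\,dt
=M^{-p+1}|u(R)|^p .
\]
Since $W$ vanishes on $(R,\infty)$, the left-hand side of the claimed inequality is unchanged by the extension, while the right-hand side becomes the appropriate constant times $\int_0^R V|u'|^p\,dr+M^{-p+1}|u(R)|^p$, which is exactly the bracketed quantity in the corollary. Applying Theorem~\ref{weightdinv} to the extended data now yields both inequalities, with constants $\overline B'$ and $\underline B'$ formed from the extended $V,W$.

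It then remains to identify $\overline B'$ with $\overline B'_R(M)$ and $\underline B'$ with $\underline B'_R(M)$. For $0<s<R$ one has $\int_s^\infty V(t)^{-\frac1{p-1}}\,dt=M+\int_s^R V(t)^{-\frac1{p-1}}\,dt$ (and likewise with $s$ replaced by $t$), while all integrals of $W$ are the original ones because $W\equiv 0$ on $(R,\infty)$; hence on $0<s<R$ the supremands of $\overline B'$ and $\underline B'$ agree with those of $\overline B'_R(M)$ and $\underline B'_R(M)$. For $s\ge R$ the $\overline B'$-supremand $\big(\int_s^\infty V^{-\frac1{p-1}}\big)^{p-1}\int_0^s W$ is nonincreasing in $s$ (the second factor is constant, the first nonincreasing), so its supremum over $s\ge R$ is already captured in the limit $s\to R^-$ within $0<s<R$ (using $\int_s^R V^{-\frac1{p-1}}\to0$), and the $\underline B'$-supremand vanishes identically for $s\ge R$; thus in both cases the supremum over $s>0$ equals the supremum over $0<s<R$, giving $\overline B'=\overline B'_R(M)$ and $\underline B'=\underline B'_R(M)$. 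Alternatively — and equivalently — the whole corollary follows from Corollary~\ref{weightdint2} with $R$ replaced by $R^{-1}$, via the substitution $r\mapsto r^{-1}$ and the attendant Jacobian factors in $V$ and $W$, just as Theorem~\ref{weightdinv} was deduced from Theorem~\ref{weightd}. There is no real obstacle here: the only step requiring genuine care is this last matching of the constants (and the routine check that $\lim_{r\to R^-}u(r)$ exists once $\int_0^R V|u'|^p\,dr<\infty$); everything else is a transcription of the proof of Corollary~\ref{weightdint2}.
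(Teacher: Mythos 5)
Your argument is correct and is exactly the route the paper intends: mirror the proof of Corollary~\ref{weightdint2} with the endpoints $0$ and $\infty$ interchanged, extending $W$ by zero, $V$ so that $\int_R^\infty V^{-\frac1{p-1}}\,dt=M$, and $u$ by $u(r)=u(R)M^{-1}\int_r^\infty V^{-\frac1{p-1}}\,dt$ on $(R,\infty)$, and then apply Theorem~\ref{weightdinv}; the computation $\int_R^\infty V|u'|^p\,dr=M^{-p+1}|u(R)|^p$ and the identification of the constants are verified as you say. (The paper itself only records that the corollary follows ``in the same way'' or by the substitution $r\mapsto r^{-1}$, so your write-up simply makes that explicit.)
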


\begin{corollary}\label{weightdinvint2}
	Let $1<p<\infty$, let $R\in(0,\infty)$ and let $V,W$ be nonnegative, a.e.-finite, measurable functions on $(R,\infty)$ such that
	$$
	\int_s^\infty V(t)^{-\frac{1}{p-1}}dt <\infty
	\qquad\text{for all}\ s\in (R,\infty) \,.
	$$
	Then for any locally absolutely continuous function $u$ on $(R,\infty)\!$ with $\liminf_{r\to \infty} \!|u(r)|\!=0$,
	\begin{equation}
		\label{eq:hardyintromainweightdinv1int2}
		\int_R^\infty W(r) |u(r)|^p \,dr \leq \frac{p^p}{(p-1)^{p-1}} 
		\overline B'^R \int_R^\infty V(r) |u'(r)|^p\,dr
	\end{equation}
	and
	\begin{equation}
		\label{eq:hardyintromainweightdinv2int2}
		\int_R^\infty W(r) |u(r)|^p \,dr \leq \left(\frac{p}{p-1} \right)^p \underline B'^R \int_R^\infty V(r) |u'(r)|^p\,dr
	\end{equation}
	with
	\begin{equation*}
		\overline B'^R := \sup_{s>R} \left( \int_s^\infty V(t)^{-\frac1{p-1}}\,dt \right)^{p-1} \left( \int_R^s W(t)\,dt \right)
	\end{equation*}
	and
	\begin{equation*}
		\underline B'^R := \sup_{s>R} \left( \int_s^\infty V(t)^{-\frac{1}{p-1}}dt \right)^{-1} \int_s^\infty W(t) \left( \int_t^\infty V(t')^{-\frac{1}{p-1}}dt' \right)^p dt \,.
	\end{equation*}
\end{corollary}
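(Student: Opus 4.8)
The plan is to deduce this from Corollary~\ref{weightdint1} by the inversion $r\mapsto r^{-1}$, i.e.\ the same substitution that turned Theorem~\ref{weightd} into Theorem~\ref{weightdinv}. (One could instead repeat the proof of Corollary~\ref{weightdint1} verbatim, now starting from Theorem~\ref{weightdinv} and extending $W,V,u$ to $(0,R)$; but passing through Corollary~\ref{weightdint1} is cleaner, since the degenerate case $\int_R^\infty V(t)^{-\frac{1}{p-1}}\,dt=\infty$ is already absorbed there.)

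First I would put $\rho:=r^{-1}$, so that $(R,\infty)$ corresponds to $(0,R^{-1})$, and transplant the data by setting, on $(0,R^{-1})$,
$$\tilde u(\rho):=u(\rho^{-1}),\qquad \tilde W(\rho):=\rho^{-2}W(\rho^{-1}),\qquad \tilde V(\rho):=\rho^{2p-2}V(\rho^{-1}).$$
Since $\rho\mapsto\rho^{-1}$ is a smooth diffeomorphism of $(0,\infty)$ onto itself, $\tilde u$ is locally absolutely continuous on $(0,R^{-1})$ with $\tilde u'(\rho)=-\rho^{-2}u'(\rho^{-1})$, and $\liminf_{\rho\to0}|\tilde u(\rho)|=\liminf_{r\to\infty}|u(r)|=0$, exactly as required by Corollary~\ref{weightdint1}. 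A one-line change of variables in each integral gives
\begin{align*}
 \int_R^\infty W(r)|u(r)|^p\,dr &= \int_0^{R^{-1}}\tilde W(\rho)|\tilde u(\rho)|^p\,d\rho, \\
 \int_R^\infty V(r)|u'(r)|^p\,dr &= \int_0^{R^{-1}}\tilde V(\rho)|\tilde u'(\rho)|^p\,d\rho,
\end{align*}
and likewise $\int_0^s\tilde V(t)^{-\frac{1}{p-1}}\,dt=\int_{1/s}^\infty V(\sigma)^{-\frac{1}{p-1}}\,d\sigma$, which is finite for every $s\in(0,R^{-1})$ by hypothesis. Hence $\tilde V,\tilde W$ fall within the scope of Corollary~\ref{weightdint1} on $(0,R^{-1})$.

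I would then apply Corollary~\ref{weightdint1} to $\tilde u,\tilde V,\tilde W$ on $(0,R^{-1})$ and identify the resulting constants. Writing $\sigma=s^{-1}$ (which runs over $(R,\infty)$ as $s$ runs over $(0,R^{-1})$) and changing variables once more, one checks that $\int_s^{R^{-1}}\tilde W(t)\,dt=\int_R^{\sigma}W(\tau)\,d\tau$ and $\int_0^t\tilde V(t')^{-\frac{1}{p-1}}\,dt'=\int_{1/t}^\infty V(\tau)^{-\frac{1}{p-1}}\,d\tau$, so that the quantities $\overline B_{R^{-1}}$ and $\underline B_{R^{-1}}$ of Corollary~\ref{weightdint1} (formed with $\tilde V,\tilde W$) equal exactly $\overline B'^R$ and $\underline B'^R$, respectively. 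Substituting these identities into \eqref{eq:hardyintromainweightd1int1} and \eqref{eq:hardyintromainweightd2int1} yields \eqref{eq:hardyintromainweightdinv1int2} and \eqref{eq:hardyintromainweightdinv2int2}.

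I expect no conceptual obstacle; the only point demanding care is the bookkeeping of the four nested integrals defining $\overline B'^R$ and $\underline B'^R$ under $r\mapsto r^{-1}$ --- in particular, verifying that the powers $\rho^{-2}$ and $\rho^{2p-2}$ built into $\tilde W$ and $\tilde V$ are precisely those for which all Jacobian factors cancel and the condition $\liminf|u|=0$ migrates from $r=0$ to $r=\infty$. Since this is the same computation that underlies the passage from Theorem~\ref{weightd} to Theorem~\ref{weightdinv}, I would refer back to it rather than reproduce it.
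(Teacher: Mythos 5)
Your proof is correct and matches the paper's own (second) suggested route: the paper explicitly notes that Corollaries \ref{weightdinvint1} and \ref{weightdinvint2} can either be obtained by repeating the proofs of the preceding corollaries starting from Theorem \ref{weightdinv}, or by applying the inversion $r\mapsto r^{-1}$ to Corollaries \ref{weightdint1} and \ref{weightdint2}, which is exactly what you do. The bookkeeping with $\tilde u(\rho)=u(\rho^{-1})$, $\tilde W(\rho)=\rho^{-2}W(\rho^{-1})$, $\tilde V(\rho)=\rho^{2p-2}V(\rho^{-1})$ is carried out correctly and the identification $\overline B_{R^{-1}}=\overline B'^R$, $\underline B_{R^{-1}}=\underline B'^R$ checks out.
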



\section{An application to the theory of Schr\"odinger operators}\label{sec:so}

In this section we explain the use of inequalities \eqref{eq:hardyintromainweightd1} and \eqref{eq:hardyintromainweightd2} for a problem concerning the eigenvalues of Schr\"odinger operators. We wish to stress that \emph{both} inequalities \eqref{eq:hardyintromainweightd1} and \eqref{eq:hardyintromainweightd2} are equally important, the former when the underlying space dimension is one and the latter when it is three or higher. The two-dimensional case is somewhat special, but it is eventually also based on \eqref{eq:hardyintromainweightd1}.

\begin{theorem}\label{finitelymanybirman}
	Let $Q\in L^1_\loc(\R^d)$ with $Q_+\in L^p(\R^d)+L^\infty(\R^d)$, where $p=1$ if $d=1$, $p>1$ if $d=2$ and $p= d/2$ if $d\geq 3$. In addition, assume that there is an $R<\infty$ such that for all $r\geq R$,
	\begin{equation}\label{eq:finitelymanybirman}
		\begin{aligned}
			\sup_{\omega\in\Sph^{d-1}} \int_r^\infty Q(s\omega)_+\, s^{-|d-2|+1} \,ds \leq \frac{|d-2|}{4\,r^{|d-2|}} \qquad \text{if}\ d\neq 2 \,, \\
			\sup_{\omega\in\Sph^1} \int_r^\infty Q(s\omega)_+\, s\,ds \leq \frac{1}{4\,\ln r} \qquad \text{if}\ d=2 \,.
		\end{aligned}
	\end{equation}
	Then the negative spectrum of $-\Delta-Q$ consists of at most finitely many eigenvalues.
\end{theorem}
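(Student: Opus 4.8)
The plan is to invoke the variational principle in the form of Glazman's lemma: it suffices to exhibit a subspace $\mathcal N$ of the form domain $\dom q$ of $-\Delta-Q$ which has finite codimension and on which the quadratic form $q(u)=\int_{\R^d}|\nabla u|^2\,dx-\int_{\R^d}Q|u|^2\,dx$ is nonnegative; then the spectral projection of $-\Delta-Q$ onto $(-\infty,0)$ has rank at most $\codim\mathcal N$, which is exactly the assertion. That $-\Delta-Q$ is a well-defined, semibounded self-adjoint operator with $\dom q\subset H^1(\R^d)$ is standard under the stated hypothesis: Sobolev's inequality together with the splitting of the $L^p$-part of $Q_+$ into a bounded piece and a piece of small $L^p$-norm shows that $Q_+$ is form-bounded relative to $-\Delta$ with relative bound zero, while $Q_-\ge0$ only improves matters. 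Since $Q_-\ge0$ we have $q(u)\ge\int_{\R^d}(|\nabla u|^2-Q_+|u|^2)\,dx$, and the whole argument comes down to splitting this expression over the ball $B_R$ and its exterior, with $R$ as in \eqref{eq:finitelymanybirman}.

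The exterior estimate is the heart of the matter. Writing $x=r\omega$ and using $|\nabla u|^2\ge|\partial_r u|^2$ in polar coordinates,
\[
\int_{\R^d\setminus B_R}(|\nabla u|^2-Q_+|u|^2)\,dx\ \ge\ \int_{\Sph^{d-1}}\!\Bigl(\int_R^\infty\bigl(|\partial_r u(r\omega)|^2-Q(r\omega)_+|u(r\omega)|^2\bigr)r^{d-1}\,dr\Bigr)d\omega .
\]
For each fixed $\omega$ one applies a one-dimensional weighted Hardy inequality from Section \ref{sec:intervals} with $p=2$, $V(r)=r^{d-1}$ and $W(r)=Q(r\omega)_+\,r^{d-1}$; which one depends on the dimension. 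If $d\ge3$, then $\int_s^\infty V(t)^{-1/(p-1)}\,dt=\int_s^\infty t^{1-d}\,dt=\frac1{(d-2)s^{d-2}}<\infty$, so Corollary \ref{weightdinvint2} applies (its boundary condition $\liminf_{r\to\infty}|u(r\omega)|=0$ holds for a.e.\ $\omega$ because $u\in L^{2d/(d-2)}(\R^d)$ gives $\int_0^\infty|u(r\omega)|^{2d/(d-2)}r^{d-1}\,dr<\infty$ for a.e.\ $\omega$). A direct computation yields $\underline B'^R=\sup_{s>R}\frac{s^{d-2}}{d-2}\int_s^\infty Q(t\omega)_+\,t^{3-d}\,dt$, which is $\le\frac14$ by \eqref{eq:finitelymanybirman}; since the constant in \eqref{eq:hardyintromainweightdinv2int2} is $(\tfrac p{p-1})^p=4$, one gets $\int_R^\infty Q(r\omega)_+|u(r\omega)|^2r^{d-1}\,dr\le\int_R^\infty|\partial_r u(r\omega)|^2r^{d-1}\,dr$, so the displayed exterior integral is $\ge0$. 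If instead $d\in\{1,2\}$, then $\int_s^\infty V(t)^{-1/(p-1)}\,dt=\infty$ and one uses Corollary \ref{weightdint2}, which needs no decay of $u$ but costs a term $M^{-p+1}|u(R\omega)|^p$. Taking $M=R$ for $d=1$ and $M=\ln R$ for $d=2$ (first enlarging $R$ so that $R>1$, as \eqref{eq:finitelymanybirman} permits), one finds $\overline B^R(M)=\sup_{s>R}s\int_s^\infty Q(t)_+\,dt\le\frac14$ when $d=1$ and $\overline B^R(M)=\sup_{s>R}(\ln s)\int_s^\infty Q(t\omega)_+\,t\,dt\le\frac14$ when $d=2$; since the constant in \eqref{eq:hardyintromainweightd1int2} is $\frac{p^p}{(p-1)^{p-1}}=4$, integrating over $\omega$ gives
\[
\int_{\R^d\setminus B_R}(|\nabla u|^2-Q_+|u|^2)\,dx\ \ge\ -\,c_{d,R}\int_{\partial B_R}|u|^2\,d\sigma ,\qquad c_{1,R}=R^{-1},\ \ c_{2,R}=(R\ln R)^{-1},
\]
with counting measure on $\partial B_R=\{\pm R\}$ when $d=1$.

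For the interior, introduce on $L^2(B_R)$ the self-adjoint operator $B$ associated with the form $w\mapsto\int_{B_R}(|\nabla w|^2-Q_+|w|^2)\,dx-c_{d,R}\int_{\partial B_R}|w|^2\,d\sigma$ on $H^1(B_R)$ (with $c_{d,R}=0$ and no boundary term if $d\ge3$). The Neumann Laplacian on the bounded set $B_R$ has compact resolvent; $Q_+\1_{B_R}$ is relatively form-compact (Rellich's theorem and Sobolev's inequality at the critical exponent, with the same splitting as before), and so is the boundary term (compactness of the trace map $H^1(B_R)\to L^2(\partial B_R)$). Hence $B$ has compact resolvent and is semibounded, so it has only finitely many negative eigenvalues, say $k$, with an $L^2(B_R)$-orthonormal system of eigenfunctions $\psi_1,\dots,\psi_k\in H^1(B_R)$; by min-max the form of $B$ is nonnegative on every $w\in H^1(B_R)$ with $\langle w,\psi_j\rangle_{L^2(B_R)}=0$ for all $j$. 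Extending each $\psi_j$ by zero to $\Psi_j\in L^2(\R^d)$ and putting $\mathcal N:=\{u\in\dom q:\ \langle u,\Psi_j\rangle_{L^2(\R^d)}=0,\ j=1,\dots,k\}$, a subspace of codimension at most $k$, we obtain for $u\in\mathcal N$ that $u|_{B_R}\in H^1(B_R)$ is orthogonal to all $\psi_j$, so, adding the identity $\int_{B_R}(|\nabla u|^2-Q_+|u|^2)\,dx=q_B(u|_{B_R})+c_{d,R}\int_{\partial B_R}|u|^2\,d\sigma$ to the exterior estimate, $q(u)\ge q_B(u|_{B_R})\ge0$. By Glazman's lemma this proves the theorem.

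The one genuinely delicate point is the exterior estimate and its case distinction. For $d\ge3$ the weight $t^{1-d}$ is integrable at infinity and the hypothesis is phrased through tails $\int_s^\infty$, which forces the use of the family of inequalities in which $W$ is integrated near infinity (Corollary \ref{weightdinvint2}, i.e.\ the $\underline B$-type \eqref{eq:hardyintromainweightd2}); for $d=1,2$ this integrability is lost, and one must use the other family (Corollary \ref{weightdint2}, the $\overline B$-type \eqref{eq:hardyintromainweightd1}), paying a boundary term that is then absorbed into the interior Neumann problem — this is exactly why \emph{both} doubly weighted inequalities are needed. The borderline $d=2$ requires the logarithmic weight and the harmless enlargement of $R$. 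Everything else — semiboundedness and self-adjointness of $-\Delta-Q$, relative form-compactness of $Q_+\1_{B_R}$ and of the trace term, the bound $|\nabla u|^2\ge|\partial_r u|^2$, local absolute continuity of $H^1$-functions along a.e.\ ray, and the deduction of finiteness of the negative spectrum from finite-codimensional nonnegativity of $q$ — is routine.
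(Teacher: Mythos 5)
Your proof is correct and follows essentially the same strategy as the paper's sketch: decouple interior and exterior by Neumann/Robin bracketing, establish nonnegativity of the exterior form (up to a surface term) by applying the $\underline B$-type inequality (Corollary~\ref{weightdinvint2}) radially for $d\ge3$ and the $\overline B$-type inequality (Corollary~\ref{weightdint2}) with the boundary term for $d=1,2$, and dispose of the interior by compactness of the resolvent of the truncated Robin operator. The only (harmless) point of divergence is that for $d=2$ you apply Corollary~\ref{weightdint2} directly with $V(r)=r$ and $M=\ln R$, whereas the paper first reduces to the $d=1$ setting via the exponential change of variables $r=e^x$; these are of course equivalent, and your version avoids the intermediate substitution.
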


This theorem appears as \cite[Proposition 4.19]{FrLaWe}. We note that by \cite[Proposition 4.3]{FrLaWe} under the conditions in the first sentence, $-\Delta-Q$ can be defined as a selfadjoint operator in $L^2(\R^d)$ via a lower semibounded and closed quadratic form. We also emphasize that by `finitely many eigenvalues' we mean, more precisely, that the total spectral multiplicity of the negative spectrum is finite.

As a consequence of Theorem \ref{finitelymanybirman}, we see that the finiteness of the negative spectrum follows from the existence of an $R<\infty$ such that
\begin{equation}
	\label{eq:finitelymany}
	Q(x) \leq
	\begin{cases}
		\frac{(d-2)^2}{4|x|^2} & \text{if}\ d\neq 2 \,,\\
		\frac{1}{4|x|^2(\ln|x|)^2} & \text{if}\ d=2 \,,
	\end{cases}
	\qquad\text{for all}\ |x|\geq R \,.
\end{equation}
The constants here are sharp in the sense that, if there are $R<\infty$ and $\epsilon>0$ such that
\begin{equation}
	\label{eq:infinitelymany}
	Q(x) \geq
	\begin{cases}
	(1+\epsilon)\, \frac{(d-2)^2}{4|x|^2} & \text{if}\ d\neq 2 \,,\\
	(1+\epsilon)\, \frac{1}{4|x|^2(\ln|x|)^2} & \text{if}\ d=2 \,,
	\end{cases}
	\qquad\text{for all}\ |x|\geq R \,,
\end{equation}
then the negative spectral subspace of $-\Delta-Q$ is infinite dimensional; see \cite[Proposition 4.21]{FrLaWe}.

Conditions of the type \eqref{eq:finitelymany} and \eqref{eq:infinitelymany} go back at least to the work of Courant and Hilbert \cite[Section VI.5]{CoHi}; see also \cite[Theorem IV.6]{Gl}. Our interest was in the more precise condition in Theorem \ref{finitelymanybirman}, which involves an integral bound rather than a pointwise bound. Such integral conditions appear in Birman's paper \cite{Bi}, which contains a qualitative version of Theorem \ref{finitelymanybirman}.

\begin{proof}
	A complete proof of Theorem \ref{finitelymanybirman} is contained in \cite{FrLaWe}. Here we only sketch the major steps, emphasizing the connection with Hardy inequalities.
	
	The first step is to separate the inside from the outside. By this we mean that we will bound $-\Delta-Q$ from below by a direct sum of two operators, one acting in $L^2(B_R)$ and one in $L^2(\overline{B_R}^c)$. Both operators will act as $-\Delta-Q$. In dimensions $d\geq 3$ they will both have Neumann boundary conditions, while in dimensions $d=1,2$ they will have certain Robin boundary conditions. It is standard that the operator in $L^2(B_R)$ will have discrete spectrum and so, in particular, the number of its negative eigenvalues is finite. Thus, it remains to prove that the number of negative eigenvalues of the operator in $L^2(\overline{B_R}^c)$ is finite. In fact, we will show that this operator is nonnegative, that is, we will show that
	$$
	\int_{\overline{B_R}^c} Q(x)|\psi(x)|^2\,dx \leq \int_{\overline{B_R}^c} |\nabla\psi(x)|^2\,dx + c_R \int_{\partial B_R} |\psi(x)|^2\,d\sigma(x)
	\quad\text{for all}\ \psi\in H^1(\overline{B_R}^c) \,.
	$$
	Here $c_R=R^{-1}$ if $d=1$, $c_R = (\ln R)^{-1}$ if $d=2$ and $c_R=	0$ if $d\geq 3$. This is the parameter defining the Robin boundary conditions. By Fubini's theorem, the latter inequality will be a consequence of the inequality
	\begin{equation}
		\label{eq:soproof}
		\int_R^\infty Q(r\omega)|u(r)|^2 r^{d-1}\,dr \leq \int_R^\infty |u'(r)|^2 r^{d-1}\,dr + c_R |u(R)|^2
		\qquad\text{for all}\ u\in H^1_d(R,\infty) \,,
	\end{equation}
	where $H^1_d(R,\infty)$ denotes the space of all weakly differentiable functions on $(R,\infty)$ that together with their derivatives belong to $L^2((R,\infty),r^{d-1}\,dr)$. Note that for such functions $u(R)$ is well-defined.
	
	Inequality \eqref{eq:soproof} for $d=1$ follows from \eqref{eq:hardyintromainweightd1int2} with $W(r)=Q(r\omega)$, $V(r)=1$ and $M=R$. Note that by assumption $\overline B^R(M)\leq 1/4$. Inequality \eqref{eq:soproof} for $d\geq 3$ follows from \eqref{eq:hardyintromainweightdinv2int2} with $W(r)=Q(r\omega)r^{d-2}$ and $V(r)=r^{d-2}$. Note that by assumption $\overline B'^R\leq 1/4$.
	
	To prove inequality \eqref{eq:soproof} for $d=2$ we may assume that $R\geq 1$. Setting $u(x):= \tilde u(e^x)$, $w(x)=e^{2x}W(e^x)$ and $X=\ln R$, we have
	$$
	\int_R^\infty |u'(r)|^2r\,dr + c_R |u(R)|^2 = \int_X^\infty |f'(x)|^2\,dx + X^{-1} |f(X)|^2
	$$
	and	
	$$
	\int_R^\infty Q(r\omega)|u(r)|^2 r\,dr = \int_X^\infty w(x) |f(x)|^2\,dx \,.
	$$
	Thus, the claimed inequality follows, like the one-dimensional inequality, from \eqref{eq:hardyintromainweightd1int2}.
	
	This completes our sketch of proof of Theorem \ref{finitelymanybirman}.
\end{proof}

We end this paper with a variation of Theorem \ref{finitelymanybirman}, which seems to be new.

\begin{theorem}\label{finitelymanybirman2}
	Let $Q\in L^1_\loc(\R^d)$ with $Q_+\in L^p(\R^d)+L^\infty(\R^d)$, where $p=1$ if $d=1$, $p>1$ if $d=2$ and $p= d/2$ if $d\geq 3$. In addition, assume that there is an $R<\infty$ such that for all $r\geq R$,
	\begin{equation}\label{eq:finitelymanybirman2}
		\begin{aligned}
			\sup_{\omega\in\Sph^{d-1}} \int_r^\infty \left( Q(s\omega) - \frac{(d-2)^2}{4s^2}\right)_+ s \,ds \leq \frac{1}{4\,\ln r} \,.
		\end{aligned}
	\end{equation}
	Then the negative spectrum of $-\Delta-Q$ consists of at most finitely many eigenvalues.
\end{theorem}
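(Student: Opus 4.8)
The plan is to imitate the proof of Theorem~\ref{finitelymanybirman}, exploiting that \eqref{eq:finitelymanybirman2} controls $Q$ only after the critical inverse-square weight $(d-2)^2/(4s^2)$ has been subtracted. When $d=2$ that weight vanishes and \eqref{eq:finitelymanybirman2} is literally the $d=2$ case of \eqref{eq:finitelymanybirman}, so the claim follows from Theorem~\ref{finitelymanybirman}; assume henceforth $d\ne2$. Exactly as in the proof of Theorem~\ref{finitelymanybirman}, a Neumann (for $d\ge3$) or Robin (for $d=1$) bracketing bounds $-\Delta-Q$ from below by the direct sum of an operator on $L^2(B_R)$, which has discrete spectrum and hence finitely many negative eigenvalues, and an operator on $L^2(\overline{B_R}^c)$, and it remains to show the latter is nonnegative. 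By Fubini this reduces to the radial inequality \eqref{eq:soproof} for every $\omega\in\Sph^{d-1}$ and every $u\in H^1_d(R,\infty)$. Since enlarging $R$ only enlarges the discrete part and leaves \eqref{eq:finitelymanybirman2} valid for the new value of $R$, I may assume $R$ as large as convenient, in particular $R>1$.

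The central step is the ground-state substitution $u(r)=r^{-(d-2)/2}v(r)$. Since $(r^{2-d})'\,r^{d-1}=2-d$ is constant, the cross term in $|u'|^2r^{d-1}$ integrates by the fundamental theorem of calculus, giving on any interval $(R,M)$
\[
\int_R^M|u'|^2r^{d-1}\,dr=\int_R^M\frac{(d-2)^2}{4r^2}|u|^2r^{d-1}\,dr+\frac{2-d}{2}\bigl(|v(M)|^2-|v(R)|^2\bigr)+\int_R^M r|v'|^2\,dr \,.
\]
The assumptions $u,u'\in L^2((R,\infty),r^{d-1}\,dr)$ give, via $r^{d-3}\le R^{-2}r^{d-1}$ on $(R,\infty)$, that the integrals over $(R,\infty)$ are all finite, and $\int_R^\infty r|v|^2\,dr=\int_R^\infty|u|^2r^{d-1}\,dr<\infty$ forces $\liminf_{r\to\infty}|v(r)|^2=0$. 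Letting $M\to\infty$ along a suitable sequence if $d\ge3$, or simply dropping the nonnegative term $-\tfrac12|v(M)|^2$ if $d=1$, yields
\[
\int_R^\infty\frac{(d-2)^2}{4r^2}|u|^2r^{d-1}\,dr\le\int_R^\infty|u'|^2r^{d-1}\,dr-\frac{d-2}{2}|v(R)|^2-\int_R^\infty r|v'|^2\,dr \,,
\]
with equality for $d\ge3$. Writing $Q\le\frac{(d-2)^2}{4r^2}+\bigl(Q-\frac{(d-2)^2}{4r^2}\bigr)_+$, inserting this bound, and using $|u(r)|^2r^{d-1}=r|v(r)|^2$ together with $|u(R)|^2=R^{2-d}|v(R)|^2$ and the values $c_R=0$ ($d\ge3$), $c_R=R^{-1}$ ($d=1$), the inequality \eqref{eq:soproof} reduces to
\[
\int_R^\infty\Bigl(Q(r\omega)-\frac{(d-2)^2}{4r^2}\Bigr)_+ r\,|v(r)|^2\,dr\le\int_R^\infty r\,|v'(r)|^2\,dr+\kappa\,|v(R)|^2 \,,
\]
where $\kappa=\frac{d-2}{2}>0$ if $d\ge3$ and $\kappa=\frac12>0$ if $d=1$.

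This last inequality is a logarithmic Hardy inequality. Substituting $r=e^x$, $f(x)=v(e^x)$, $X=\ln R>0$, one has $\int_R^\infty r|v'|^2\,dr=\int_X^\infty|f'|^2\,dx$, $|v(R)|^2=|f(X)|^2$, and the left-hand side becomes $\int_X^\infty\tilde w(x)|f(x)|^2\,dx$ with $\tilde w(x)=e^{2x}\bigl(Q(e^x\omega)-\frac{(d-2)^2}{4}e^{-2x}\bigr)_+$. Apply \eqref{eq:hardyintromainweightd1int2} of Corollary~\ref{weightdint2} on $(X,\infty)$ with $p=2$, $V\equiv1$ and a parameter $M\in(0,X]$. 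Since $\int_s^\infty\tilde w(t)\,dt=\int_{e^s}^\infty\rho\bigl(Q(\rho\omega)-\frac{(d-2)^2}{4\rho^2}\bigr)_+\,d\rho\le\frac1{4s}$ for $s\ge X$ by \eqref{eq:finitelymanybirman2}, and $M\le X$, we get $\overline B^X(M)=\sup_{s>X}(M+s-X)\int_s^\infty\tilde w\le\frac14\sup_{s>X}\bigl(1+\tfrac{M-X}{s}\bigr)\le\frac14$, hence $\int_X^\infty\tilde w|f|^2\,dx\le\int_X^\infty|f'|^2\,dx+M^{-1}|f(X)|^2$. Choosing $M$ in the interval $[\kappa^{-1},X]$, which is nonempty once $\ln R\ge\kappa^{-1}$ (true after the harmless enlargement of $R$), gives $M^{-1}\le\kappa$ and completes the proof.

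I expect the only real difficulty to be careful bookkeeping: getting the boundary terms in the ground-state identity right, including the sign of $\tfrac{2-d}{2}$ that flips between $d=1$ and $d\ge3$; justifying the limit $M\to\infty$ using $\liminf_{r\to\infty}|v(r)|^2=0$; and verifying that the admissible range $[\kappa^{-1},\ln R]$ for $M$ is nonempty after enlarging $R$. The one genuinely new ingredient --- absorbing the critical inverse-square part of $Q$ by the ground-state transform and handling the remainder by the critical-dimension logarithmic Hardy inequality of Corollary~\ref{weightdint2} --- is short once this is in place.
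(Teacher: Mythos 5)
Your proposal is correct and follows essentially the same route as the paper: a ground-state substitution $u(r)=r^{-(d-2)/2}v(r)$ absorbs the critical inverse-square potential and reduces \eqref{eq:soproof} to the logarithmic ($d=2$-type) Hardy inequality of Corollary~\ref{weightdint2}, with the Robin boundary term tracked carefully through the transform. The only cosmetic difference is that you keep the $c_R$ from Theorem~\ref{finitelymanybirman} and instead tune the free parameter $M$ in Corollary~\ref{weightdint2} (after enlarging $R$), whereas the paper directly chooses $c_R=\bigl((\ln R)^{-1}-\tfrac{d-2}{2}\bigr)R^{d-2}$ so that the reduced inequality is literally the $d=2$ inequality already proved; both are equivalent bookkeeping of the same identity. (Minor slip: in dropping the term $-\tfrac12|v(M)|^2$ when $d=1$ you call it ``nonnegative''; it is nonpositive, which is why dropping it only enlarges the right side.)
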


For $d=2$, this is Theorem \ref{finitelymanybirman}. We do not know whether, for general $d$, Theorem \ref{finitelymanybirman2} implies Theorem \ref{finitelymanybirman}, but it does imply finiteness of the negative spectrum under the condition that there is an $R<\infty$ such that
$$
Q(x) \leq \frac{(d-2)^2}{4|x|^2} + \frac{1}{4|x|^2(\ln|x|)^2} 
\qquad\text{for all}\ |x|\geq R \,.
$$
This improves the condition \eqref{eq:finitelymany} for $d\neq 2$. Moreover, the condition is optimal in the sense that, if there are $R<\infty$ and $\epsilon>0$ such that
$$
Q(x) \geq \frac{(d-2)^2}{4|x|^2} + (1+\epsilon) \frac{1}{4|x|^2(\ln|x|)^2}
\qquad\text{for all}\ |x|\geq R \,,
$$
then the negative spectral subspace of $-\Delta-Q$ is infinite dimensional. This is proved by an argument similar to that used in \cite[Proposition 4.21]{FrLaWe}, together with the same change of variables $\tilde\psi(x) = |x|^\frac{d-2}{2}\psi(x)$ as in the following proof.

\begin{proof}
	The proof is similar to that of Theorem \ref{finitelymanybirman}. Again, it suffices to prove inequality \eqref{eq:soproof}, but this time we will choose $c_R := \left( (\ln R)^{-1} - \frac{d-2}{2}\right) R^{d-2}$. (In dimensions $d\geq 3$ we could also assume that $R\geq e^{2/(d-2)}$ and choose $c_R=0$.) Writing $\tilde u(r) = r^{\frac{d-2}{2}} u(r)$ and noting that
	$$
	\int_R^\infty |u'(r)|^2r^{d-1}\,dr = \int_R^\infty |\tilde u'(r)|^2r \,dr + \int_R^\infty\tfrac{(d-2)^2}{4r^2} |\tilde u(r)|^2 r \,dr + \tfrac{d-2}2 |\tilde u(R)|^2 \,,
	$$
	we see that \eqref{eq:soproof} is equivalent to
	\begin{align*}
		\int_R^\infty \left( Q(r\omega)-\tfrac{(d-2)^2}{4r^2} \right)|\tilde u(r)|^2 r \,dr 
		& \leq \int_R^\infty |\tilde u'(r)|^2r \,dr + (\ln R)^{-1} |\tilde u(R)|^2 \\
		& \quad\ \text{for all}\ \tilde u\in H_2^1(R,\infty) \,.
	\end{align*}
	The latter inequality is precisely the one proved in the proof of Theorem \ref{finitelymanybirman} for $d=2$. This concludes the proof.
\end{proof}


\bibliographystyle{amsalpha}

\begin{thebibliography}{16}
	
\bibitem{BeSh} C. Bennett, R. Sharpley, \textit{Interpolation of operators}. Pure and Applied Mathematics, 129. Academic Press, Inc., Boston, MA, 1988.

\bibitem{Bi} M. \v{S}. Birman, \textit{On the spectrum of singular boundary-value problems}. Mat. Sb. (N.S.) \textbf{55} (97) (1961), 125--174.

\bibitem{CoHi} R. Courant, D. Hilbert, \textit{Methods of mathematical physics. Vol. I}.  Wiley Classics Library. A Wiley-Interscience Publication. John Wiley \& Sons, Inc., New York, 1989.

\bibitem{Da} E. B. Davies, \textit{A review of Hardy inequalities}. In: The Maz'ya anniversary collection, Vol. 2 (Rostock, 1998), 55--67, Oper. Theory Adv. Appl., \textbf{110}, Birkh\"auser, Basel, 1999.

\bibitem{FrLaWe} R. L. Frank, A. Laptev, T. Weidl, {\it Schr\"odinger Operators: Eigenvalues and Lieb--Thirring Inequalities}. Cambridge University Press, Cambridge, to appear.	

\bibitem{Gl} I. M. Glazman, \textit{Direct methods of qualitative spectral analysis of singular differential operators}. Daniel Davey \& Co., Inc., New York 1966.

\bibitem{KaKr} I. S. Kac, M. G. Kre\u{\i}n, \textit{Criteria for the discreteness of the spectrum of a singular string}. Izv. Vys\v{s}. U\v{c}ebn. Zaved. Matematika (1958), no. 2 (3), 136--153.  

\bibitem{KuMaPe} A. Kufner, L. Maligranda, L.-E. Persson, \textit{The prehistory of the Hardy inequality}. Amer. Math. Monthly, \textbf{113} (2006), no. 8, 715--732.

\bibitem{Le} G. Leoni, \textit{A first course in Sobolev spaces}. Second edition. Graduate Studies in Mathematics, 181. American Mathematical Society, Providence, RI, 2017.

\bibitem{LuZa} W. A. J. Luxemburg, A. C. Zaanen, \textit{Some examples of normed K\"othe spaces}. Math. Ann. \textbf{162} (1965/66), 337--350.

\bibitem{Ma} V. Maz'ya, \textit{Sobolev spaces with applications to elliptic partial differential equations}. Second, revised and augmented edition. Grundlehren der mathematischen Wissenschaften \textbf{342}. Springer, Heidelberg, 2011.

\bibitem{Mu} B. Muckenhoupt, \textit{Hardy's inequality with weights}. Studia Math. \textbf{44} (1972), 31--38.

\bibitem{OpKu} B. Opic, A. Kufner, \textit{Hardy-type inequalities}. Pitman Research Notes in Mathematics Series, 219. Longman Scientific \& Technical, Harlow, 1990.

\bibitem{Ta} G. Talenti, \textit{Osservazioni sopra una classe di disuguaglianze}. Rend. Sem. Mat. Fis. Milano \textbf{39} (1969), 171--185. 

\bibitem{To} G. Tomaselli, \textit{A class of inequalities}. Boll. Un. Mat. Ital. (4) \textbf{2} (1969), 622--631.
	
\bibitem{Za} A. C. Zaanen, \textit{Integration}. Interscience Publishers John Wiley \& Sons, Inc., New York, 1967.

\end{thebibliography}

\end{document}